\DeclareMathOperator{\Trace}{Tr}
\DeclareMathOperator{\Norm}{N}
\DeclareMathOperator{\Gal}{Gal}
\renewcommand{\phi}{\varphi}
\newcommand{\F}{{\mathbb F}}
\newcommand{\C}{{\mathbb C}}
\newcommand{\Q}{{\mathbb Q}}
\newcommand{\Z}{{\mathbb Z}}
\newcommand{\Fp}{\F_p}
\newcommand{\carmul}[1]{\widehat{\grmul{#1}}}
\newcommand{\card}[1]{\left|{#1}\right|}
\newcommand{\sums}[1]{\sum_{\substack{#1}}}
\newcommand{\mins}[1]{\min_{\substack{#1}}}
\newcommand\Gauss[1]{{\tau}_{#1}}
\newcommand\Weil[2]{{\rm W}_{#1,#2}}
\newcommand\Roots{R}
\newcommand\Val[2]{{\rm V}_{#1,#2}}
\newcommand\valuation{{\rm val}_{p}}
\newcommand\twovaluation{{\rm val}_{2}}
\newcommand\grmul[1]{{#1}^{\times}}
\newcommand\cardunits[1]{\card{\grmul{#1}}}
\newcommand{\norm}[2]{\Norm_{#1/#2}}
\newcommand{\trace}[2]{\Trace_{#1/#2}}
\newtheorem{theorem}{Theorem}[section]
\newtheorem{proposition}[theorem]{Proposition}
\newtheorem{lemma}[theorem]{Lemma}
\newtheorem{corollary}[theorem]{Corollary}
\newtheorem{conjecture}[theorem]{Conjecture}
\theoremstyle{definition}
\newtheorem{remark}[theorem]{Remark}
\title{Cyclotomy of Weil Sums of Binomials}
\author{Yves Aubry}
\address{Institut de Math\'ematiques de Toulon, Universit\'e de Toulon, France and Institut de Math\'ematiques de Marseille, CNRS-UMR 7373, Aix-Marseille Universit\'e, France}
\author{Daniel J.~Katz}
\address{Department of Mathematics, California State University, Northridge, \: United States}
\author{Philippe Langevin}
\address{Institut de Math\'ematiques de Toulon, Universit\'e de Toulon, France}
\date{first version: 12 December 2013; this version: 02 April 2015}
\begin{document}

\begin{abstract}
The Weil sum $W_{K,d}(a)=\sum_{x \in K} \psi(x^d + a x)$ where $K$ is a finite field, $\psi$ is an additive character of $K$, $d$ is coprime to $|K^\times|$, and $a \in K^\times$ arises often in number-theoretic calculations, and in applications to finite geometry, cryptography, digital sequence design, and coding theory.
Researchers are especially interested in the case where $W_{K,d}(a)$ assumes three distinct values as $a$ runs through $K^\times$.
A Galois-theoretic approach, combined with $p$-divisibility results on Gauss sums, is used here to prove a variety of new results that constrain which fields $K$ and exponents $d$ support three-valued Weil sums, and restrict the values that such Weil sums may assume.
\end{abstract}

\maketitle
\section{Introduction}
Let $K$ be a finite field of characteristic $p$.
Let $\psi_K$ be the canonical additive character
of $K$, that is, $\psi_K( x )=\exp( 2i\pi\trace K\Fp(x)/p )$ where 
$\trace K{\Fp}$ is the absolute trace.
{\it Weil sums} with $\psi_K$ applied to binomials, that is, sums of the form $\sum_{x \in K} \psi_K(b x^j + c x^k)$, have been studied extensively from the early twentieth century to present \cite{Kloosterman,Mordell,Vinogradow,Davenport-Heilbronn,Akulinicev,Karatsuba,Carlitz-1978,Carlitz-1979,Lachaud-Wolfmann,Katz-Livne,Coulter,Cochrane-Pinner-2003,Cochrane-Pinner-2011}.
We are interested in such sums when $j$ and $k$ are coprime to $\cardunits{K}$, in which case we reparameterize them to obtain sums of the form
\begin{equation}\label{James}
\Weil Kd(a) = \sum_{x \in K} \psi_K(x^d + a x)
\end{equation}
with $\gcd(d,\cardunits{K})=1$ and $a\in K$.
This definition will remain in force throughout the paper, and we shall always insist that $\gcd(d,\cardunits{K})=1$ whenever we write $\Weil Kd$.
The sums $\Weil Kd(a)$ are always real algebraic integers \cite[Theorem 3.1(a)]{Helleseth}, and furthermore, are all rational integers if and only if $d \equiv 1 \pmod{p-1}$ \cite[Theorem 4.2]{Helleseth}.
Apart from arising often in number-theoretic calculations, these sums are also the key to problems in finite geometry, cryptography, digital sequence design, and coding theory, as discussed in \cite[Appendix]{Katz}.

For a fixed $K$ and $d$, we consider $\Weil Kd(a)$ as a function of $a \in \grmul K$, and are interested in how many different values it assumes as $a$ 
runs through $\grmul K$.
$\Weil Kd(a)$ with $a=0$ is passed over, as it is the Weil sum of the monomial $x^d$, and since $x\mapsto x^d$ is a permutation of $K$, we always have $\Weil Kd(0)=0$.
We call $\{\Weil Kd(a): a \in \grmul K\}$ the {\it value set} of $\Weil Kd$, and say that $\Weil Kd$ is {\it $v$-valued} over $K$ to mean that this set is of cardinality $v$.

If $d \equiv p^j \pmod{\cardunits{K}}$ for some $j$, 
we say that $d$ is {\it degenerate over $K$},
because $\trace K\Fp(x^d+a x)=\trace K\Fp((1+a)x)$, and so the binomial 
effectively becomes zero (if $a=-1$) or a nonvanishing linear form (if $a\not=-1$).
Thus if $d$ is degenerate over $K$, one readily obtains for $a \in K$ that
\begin{equation}\label{Augustine}
\Weil Kd(a)=
\begin{cases} \card{K}  & \text{if $a=-1$,} \\ 
0 & \text{otherwise.} \end{cases}
\end{equation}
Helleseth \cite[Theorem 4.1]{Helleseth} shows that one always obtains a richer value set in the nondegenerate case.
\begin{theorem}[Helleseth, 1976] \label{Aaron}
If $d$ is nondegenerate over $K$, then $\Weil Kd(a)$ takes 
at least three values as $a$ runs through $\grmul K$.
\end{theorem}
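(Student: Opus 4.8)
The plan is to argue by contradiction using only the first two power moments, together with the trivial bounds on a Weil sum and a characterization of when the maximal value $\card K$ is attained; the nondegeneracy hypothesis will enter only through that last point. Writing $q=\card K$, I would first record the two moment identities $\sum_{a\in\grmul K}\Weil Kd(a)=q$ and $\sum_{a\in\grmul K}\Weil Kd(a)^2=q^2$, both obtained by interchanging the order of summation. The inner sum $\sum_{a\in K}\psi_K(ax)$ equals $q$ when $x=0$ and $0$ otherwise, which gives the first identity; for the second, the diagonal contribution collapses because $(-x)^d=-x^d$ (in odd characteristic $q-1$ is even so $d$ is odd, and in characteristic $2$ one has $2x^d=0$). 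The restriction to $\grmul K$ is harmless since $\Weil Kd(0)=0$. I would also note the hard bounds $2-q\le\Weil Kd(a)\le q$, coming from the fact that the $x=0$ term equals $1$ while every other term has real part in $[-1,1]$.

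The conceptual key is that $\Weil Kd(a)=q$ for some $a$ if and only if $d$ is degenerate. Indeed $\Weil Kd(a)=q$ forces $\trace K\Fp(x^d+ax)=0$ for all $x\in K$, i.e.\ the function $\trace K\Fp(x^d)$ coincides with the $\Fp$-linear form $x\mapsto\trace K\Fp(-ax)$. Expanding $\trace K\Fp(x^d)=\sum_i x^{dp^i}$ over Frobenius conjugates and reducing exponents modulo $q-1$, this function is $\Fp$-linear precisely when every exponent $dp^i$ is congruent to a power of $p$, which is in turn equivalent to $d\equiv p^j\pmod{q-1}$, that is, to degeneracy. Hence in the nondegenerate case $\Weil Kd(a)<q$ for every $a$.

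Now suppose for contradiction that $\Weil Kd$ took at most two values. A single value is impossible, since the two moments would force both $v=q$ and $v=q/(q-1)$; so there are exactly two values $v_1<v_2$, attained $n_1,n_2\ge1$ times with $n_1+n_2=q-1$. Because the substitution $x\mapsto\tau x$ with $\tau^d=t$ shows that $\Gal(\Q(\zeta_p)/\Q)$ merely permutes the multiset of values, both $v_1+v_2$ and $v_1v_2$ lie in $\Z$; combined with the relation $(n_1-n_2)(v_2-v_1)=(q-1)(v_1+v_2)-2q\in\Z$ read off from the linear moments, this forces $v_2-v_1\in\Q$, whence $v_1,v_2\in\Z$. (The fields with $q\le4$ support no nondegenerate $d$, so I may assume $q\ge5$, which is what makes the last implication go through.)

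Finally I would eliminate $n_1,n_2$ from the three relations $n_1+n_2=q-1$, $n_1v_1+n_2v_2=q$, $n_1v_1^2+n_2v_2^2=q^2$ to reach the single consistency condition $q(v_1+v_2)-(q-1)v_1v_2=q^2$, equivalently $v_1=-q(q-v_2)/\bigl((q-1)v_2-q\bigr)$, where by nondegeneracy $v_2$ ranges over $\{2,\dots,q-1\}$ and $v_1\ge2-q$. The main obstacle is the resulting purely Diophantine claim that this has no integer solution: when $\gcd(v_2,q)=1$ the denominator is coprime to $q$ yet exceeds the positive number $q-v_2$, so it cannot divide it; when $p\mid v_2$, cancelling the common power of $p$ reduces the divisibility to a relation between quantities of comparable size, which a short estimate forces into the excluded cases $v_2=q$ (the degenerate value) or $v_2=2$ (yielding the out-of-range $v_1=-q$). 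This contradiction completes the proof, and it is precisely here, through the bound $v_2\le q-1$, that nondegeneracy is used.
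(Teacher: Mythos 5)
The paper does not prove this theorem at all: it is imported from Helleseth's 1976 paper (his Theorem 4.1), so there is no internal proof to compare yours against, and a self-contained argument is genuinely worthwhile. Your route is consonant with the toolkit the paper builds elsewhere: your two moment identities are Lemma \ref{Prudence}(i)--(ii), your characterization of when $\Weil Kd(a)=\card{K}$ is attained is the linearized-polynomial sharpening of \eqref{Augustine}, and your observation that $\Gal(\Q(\zeta_p)/\Q)$ permutes the multiset of values (hence $v_1+v_2,\,v_1v_2\in\Z$) is Lemma \ref{Nathan}. I have also checked your Diophantine endgame and it is sound: writing $q=\card{K}$ and $D=q-(q-1)v_2$, if $p\nmid v_2$ then $D\equiv v_2\pmod p$ is prime to $q$, so $D\mid q-v_2$; but $|D|=(q-1)v_2-q$ exceeds $q-v_2>0$ by exactly $q(v_2-2)$, so $v_2\geq 3$ is impossible, while $v_2=2$ gives $v_1=-q<2-q$. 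If instead $p\mid v_2$, then $\valuation(D)=\valuation(q-v_2)=\valuation(v_2)<\valuation(q)$, so $v_1D=q(q-v_2)$ forces $\valuation(v_1)=\valuation(q)$, i.e.\ $q\mid v_1$, impossible since $0<|v_1|<q$.

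Two steps, however, are asserted without justification and need one-line patches. First, the inference from $(n_1-n_2)(v_2-v_1)\in\Z$ to $v_2-v_1\in\Q$ fails when $n_1=n_2$, and your parenthetical about $q\geq 5$ does not by itself dispose of that case; you need to add: if $n_1=n_2$, the first moment gives $(q-1)(v_1+v_2)=2q$, so $q-1\mid 2q$, hence $q-1\mid 2$, contradicting $q\geq 5$. (It is this exclusion, not the passage from $v_2-v_1\in\Q$ to integrality, that consumes the hypothesis $q\geq 5$.) Second, you let $v_2$ range over $\{2,\dots,q-1\}$, but the first moment only guarantees $v_2\geq 1$; the case $v_2=1$ must be dismissed explicitly, e.g.\ by noting that your consistency relation then yields $v_1=q(q-1)>v_2$, contradicting $v_1<v_2$. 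With these two patches your proof is complete and correct, and it stays within the power-moment and cyclotomic-Galois methods that this paper promotes.
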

Here we want to know when Weil sums of this form can be three-valued, and if so, what are the three values they may take.
We indicate all known infinite families of three-valued examples, arranged according to analogy, in Table \ref{Francis} below.
\begin{center}
\begin{table}[h!]\label{Francis}
\caption{Three-Valued Weil Sums}
\begin{center}
\begin{tabular}{c|c|c|c}
order of $K$ & $d$ (nondegenerate) & values of $W_{K,d}$ & reference \\
\hline
\hline
\multirow{2}{*}{$q=2^e$} & $d=2^i+1$ & \multirow{2}{*}{$0$, $\pm\sqrt{2^{\gcd(e,i)} q}$} & \multirow{2}{*}{\cite{Kasami-1966,Kasami-Lin-Peterson,Gold}} \\
 & $\twovaluation(i) \geq \twovaluation(e)$ & & \\
\hline 
$q=p^e$ & $d=\frac{1}{2}(p^{2 i}+1)$ & \multirow{2}{*}{$0$, $\pm\sqrt{p^{\gcd(e,i)} q}$} & \cite{Trachtenberg} ($e$ odd) \\
$p$ odd & $\twovaluation(i) \geq \twovaluation(e)$ & & \cite{Helleseth-Thesis, Helleseth} ($e$ even) \\
\hline
\multirow{2}{*}{$q=2^e$} & $d=2^{2 i}-2^i+1$, & \multirow{2}{*}{$0$, $\pm\sqrt{2^{\gcd(e,i)} q}$} & \multirow{2}{*}{\cite{Welch,Kasami-1971}} \\
 &  $\twovaluation(i) \geq \twovaluation(e)$ & & \\
\hline
$q=p^e$ & $d=p^{2 i}-p^i+1$ & \multirow{2}{*}{$0$, $\pm\sqrt{p^{\gcd(e,i)} q}$} & \cite{Trachtenberg} ($e$ odd) \\
$p$ odd & $\twovaluation(i) \geq \twovaluation(e)$ & & \cite{Helleseth-Thesis, Helleseth} ($e$ even) \\
\hline
$q=2^e$ & \multirow{2}{*}{$d=2^{e/2}+2^{(e+2)/4}+1$} & \multirow{2}{*}{$0$, $\pm 2 \sqrt{q}$} & \multirow{2}{*}{\cite{Cusick-Dobbertin}} \\
$\twovaluation(e)=1$ & & & \\
\hline
$q=2^e$ & \multirow{2}{*}{$d=2^{(e+2)/4}+3$} & \multirow{2}{*}{$0$, $\pm 2 \sqrt{q}$} & \multirow{2}{*}{\cite{Cusick-Dobbertin}} \\
$\twovaluation(e)=1$ & & & \\
\hline
$q=2^e$ & \multirow{2}{*}{$d=2^{(e-1)/2}+3$} & \multirow{2}{*}{$0$, $\pm\sqrt{2 q}$} & \multirow{2}{*}{\cite{Canteaut-Charpin-Dobbertin-1999,Canteaut-Charpin-Dobbertin-2000-Binary,Hollmann-Xiang}} \\
$e$ odd & & & \\
\hline
$q=3^e$ & \multirow{2}{*}{$d=2\cdot 3^{(e-1)/2}+1$} & \multirow{2}{*}{$0$, $\pm\sqrt{3 q}$} & \multirow{2}{*}{\cite{Dobbertin-Helleseth-Kumar-Martinsen}} \\
$e$ odd & & & \\
\hline
$q=2^e$ & $d=2^{2 i}+2^i-1$ & \multirow{2}{*}{$0$, $\pm\sqrt{2 q}$} & \multirow{2}{*}{\cite{Hollmann-Xiang,Hou}} \\
$e$ odd & $e\mid 4 i + 1$ & & \\
\hline
$q=3^e$ & $d=2\cdot 3^i+1$ & \multirow{2}{*}{$0$, $\pm\sqrt{3 q}$} & \multirow{2}{*}{\cite{Katz-Langevin-Proof}} \\
$e$ odd & $e\mid 4 i + 1$ & &
\end{tabular}
\end{center}
\end{table}
\end{center}
In several entries, we make use of the {\it $p$-adic valuation} of an integer $a$, denoted $\valuation(a)$, which is the maximum $k$ such that $p^k \mid a$ (or $\infty$ if $a=0$).
We write ``nondegenerate'' in the column heading for $d$ values to impose the condition that $d$ be nondegenerate over $K$ throughout the table, so that, for example, we cannot have $i=0$ in the first four rows.
If $K$ has characteristic $p$ and $1/d$ is interpreted modulo $\cardunits{K}$, then $\Weil K {p d}$ and $\Weil K {1/d}$ take the same values as $\Weil Kd$ \cite[Theorem 3.1]{Helleseth}, so the table records representative $d$ modulo these equivalences.

First of all, note that all these value sets consist of three rational integers, one of which is $0$, with the other two being opposites of each other.
The first two properties are inevitable facts, as shown in \cite[Theorems 1.7, 1.9]{Katz}.
\begin{theorem}[Katz, 2012]\label{Natalie}
Let $K$ be a finite field of characteristic $p$.
If $\Weil Kd$ is three-valued for some exponent $d$, then $d\equiv 1 \pmod{p-1}$, and the values must be rational integers, one of which is zero.
\end{theorem}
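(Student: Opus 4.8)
The plan is to exploit the action of $\Gal(\Q(\zeta_p)/\Q)\cong\grmul{\Fp}$ on the sums $\Weil Kd(a)$, regarded as algebraic integers in $\Z[\zeta_p]$, where $\zeta_p=\exp(2i\pi/p)$ so that $\psi_K(y)=\zeta_p^{\trace K\Fp(y)}$. For $t\in\grmul\Fp$ let $\sigma_t$ be the automorphism with $\sigma_t(\zeta_p)=\zeta_p^t$. Writing $\sigma_t\Weil Kd(a)=\sum_{x\in K}\psi_K(t x^d+t a x)$ and substituting $x\mapsto t^{-1/d}x$ (legitimate because $\gcd(d,\cardunits K)=1$ makes $y\mapsto y^d$ a bijection of $K$, and $t\in\grmul\Fp\subseteq\grmul K$), I would obtain
\[
\sigma_t\Weil Kd(a)=\Weil Kd\bigl(a\,t^{(d-1)/d}\bigr).
\]
Since $a\mapsto a\,t^{(d-1)/d}$ permutes $\grmul K$, the Galois group permutes the individual sums and therefore stabilizes the three-element value set $V$. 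As $\grmul\Fp$ is cyclic, its image in $\mathrm{Sym}(V)\le S_3$ is cyclic, hence of order $1$, $2$, or $3$; moreover two values lying in one Galois orbit are attained equally often, because $\sigma_t$ carries the fibre over one value bijectively onto the fibre over its image.

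Next I would record the two power moments, obtained from the orthogonality relation that $\sum_{a\in K}\psi_K(ax)$ equals $\card K$ for $x=0$ and vanishes otherwise:
\[
\sums{a\in\grmul K}\Weil Kd(a)=\card K,\qquad \sums{a\in\grmul K}\Weil Kd(a)^2=\card K^2,
\]
the second using that $x^d+(-x)^d$ vanishes in $K$ (immediate in characteristic $2$, and valid because $d$ is odd when $p$ is odd, as $\gcd(d,p-1)=1$). These, with the equal-multiplicity remark, dispatch the nontrivial Galois images. If the image has order $3$ the three values form one orbit, so each is attained $(\card K-1)/3$ times; the first moment forces $(\card K-1)\mid 3\card K$, whence $\card K\in\{2,4\}$, fields over which every admissible $d$ is degenerate and no three-valued sum exists. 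If the image has order $2$, then each $v_i$ is fixed by the index-two kernel, hence lies in the quadratic subfield $\Q(\sqrt{p^{*}})$, $p^{*}=(-1)^{(p-1)/2}p$, with two of them swapped and $v_3\in\Q$. When $p\equiv3\pmod4$ this subfield is imaginary, so the reality of Weil sums (\cite[Theorem 3.1(a)]{Helleseth}) forces the swapped pair to coincide, a contradiction; when $p\equiv1\pmod4$ I would eliminate the case using the divisibility input below. With all nontrivial images excluded, every $\Weil Kd(a)$ is Galois-fixed, hence rational, hence (being a real algebraic integer) a rational integer, and Helleseth's criterion \cite[Theorem 4.2]{Helleseth} then yields $d\equiv1\pmod{p-1}$, giving the first two assertions.

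The remaining, and hardest, task is to show that $0\in V$. The congruence $\zeta_p\equiv1\pmod{1-\zeta_p}$ already gives $\Weil Kd(a)\equiv\card K\equiv0\pmod{1-\zeta_p}$, and expanding $\psi_K(y)=(1+(\zeta_p-1))^{\trace K\Fp(y)}$ to higher order (the first correction $\sum_x\trace K\Fp(x^d+ax)$ vanishes because $x\mapsto x^d$ is a bijection) sharpens this to larger powers of $1-\zeta_p$. To obtain a usable lower bound on $\valuation(\Weil Kd(a))$ I would pass to the Gauss-sum expansion of $\Weil Kd(a)$ over the multiplicative characters of $K$ and invoke Stickelberger's theorem on the $p$-adic valuations of the Gauss sums $\Gauss\chi$. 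Feeding the resulting valuation bound into the two moments, I expect that the only way three integer values, all divisible by $p$, can satisfy $\sum n_i v_i=\card K$ and $\sum n_i v_i^2=\card K^2$ is with one $v_i=0$; the same valuation data is also what rules out the order-two image when $p\equiv1\pmod4$.

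The main obstacle is precisely this $p$-divisibility step. The Galois and moment arguments are routine, but forcing a zero value, and closing the $p\equiv1\pmod4$ order-two case, requires the sharp Stickelberger-type estimates for $\valuation(\Gauss\chi)$, and marrying those $p$-adic bounds to the integer moment identities is the delicate part of the argument.
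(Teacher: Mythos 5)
Your Galois--moment skeleton is exactly the one the paper uses in Section \ref{Lawrence}: your $\sigma_t\Weil Kd(a)=\Weil Kd(t^{1-1/d}a)$ is Lemma \ref{Nathan}, your equal-multiplicity remark is Corollary \ref{Genevieve}, your two moments are Lemma \ref{Prudence}, and your disposal of the order-$3$ case and of the order-$2$ case with $p\equiv 3\pmod 4$ agrees with the paper's argument. But the proposal has a genuine gap at precisely the two places you defer to Stickelberger: the order-$2$ case with $p\equiv 1\pmod 4$, and the claim that $0$ is among the values. There you only describe what you ``expect'' to happen when Gauss-sum valuation bounds are fed into the moment identities; no such computation is carried out, and you yourself flag it as the main obstacle. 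As written, the proposal therefore proves neither the rationality (in its hard case) nor the zero value, i.e.\ it does not prove the theorem.

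The idea you are missing, which is what lets the paper close the $p\equiv 1\pmod 4$ case with no Stickelberger, McEliece, or Gauss-sum valuations at all, is a third, \emph{twisted} moment: for any $b\in K$ with $b\neq 1$ one has $\sum_{a\in\grmul K}\Weil Kd(a)\Weil Kd(ba)=0$ (Lemma \ref{Richard}, a two-line orthogonality computation using that $x\mapsto x^d$ permutes $K$). Taking $b$ to realize the conjugation $\sigma$ via Lemma \ref{Nathan} gives $\sum_{a}\Weil Kd(a)\,\sigma(\Weil Kd(a))=0$ (Corollary \ref{Elaine}), i.e.\ $2N_A AB+N_C C^2=0$, where $\sigma$ swaps $A=u+v\sqrt{p}$ and $B=u-v\sqrt{p}$ and fixes $C\in\Z$, and $N_A=N_B$. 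Subtracting this from the second moment yields $4N_A p v^2=q^2$, so with $q=p^e$ and $2v\in\Z$ we get $\valuation(N_A)=2e-1-2\valuation(2v)$, which is odd; in particular $p\mid N_A$. From there, purely elementary manipulations of the linear and quadratic moment equations --- using only $p$-adic valuations of rational integers and the bound $|C|<q$ from Corollary \ref{Rufus} --- force $q\mid 2u$ and then $p\mid N_C$, whence $p\mid 2N_A+N_C=q-1$, a contradiction. Finally, note that even the paper does not re-prove the ``one value is zero'' assertion in Section \ref{Lawrence}; it cites \cite[Theorem 5.2]{Katz} for that part. So your plan to get $0\in V$ from Stickelberger plus the moments is not only unexecuted but also not how either proof of record proceeds; if you want a self-contained proof of the full statement, that part needs an actual argument.
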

Concerning the two nonzero values of a three-valued Weil sum, one must be positive and the other negative, since it is known that $\sum_{a \in \grmul K} \Weil Kd(a)^2=\left(\sum_{a \in \grmul K} \Weil Kd(a)\right)^2$.
(See Lemma \ref{Prudence} and Corollary \ref{George} below for details.)
However, it has not been proved that these values must have the same magnitude, although this is always what has been observed.
We say that a three-valued Weil sum $\Weil Kd$ is {\it symmetric} when the two nonzero values are opposites of each other.
If we assume that a three-valued Weil sum is symmetric, we can make further conclusions about the possible values.
\begin{proposition}\label{Herve}
If $K$ is the finite field of characteristic $p$ and order $q$, and if $\Weil Kd(a)$ is three-valued with values $0$ and $\pm A$, then $|A|=p^k$ for some positive integer $k$ with $\sqrt{q} < p^k < q$.
\end{proposition}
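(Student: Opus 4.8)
The plan is to extract everything from the first two power moments of the Weil sum together with a count of how often each value is attained. By Theorem~\ref{Natalie}, the hypothesis that $\Weil Kd$ is three-valued already forces $A$ to be a rational integer, which is the only external input I really need; the remainder is elementary.

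First I would record the two moments by orthogonality of additive characters. Swapping the order of summation gives
\[
\sum_{a \in K} \Weil Kd(a) = \sum_{x \in K} \psi_K(x^d) \sum_{a \in K} \psi_K(ax) = \card K,
\]
since the inner sum vanishes unless $x=0$. Because $x \mapsto x^d$ permutes $K$ (as $\gcd(d,\cardunits K)=1$), one has $\Weil Kd(0)=0$, so in fact $\sum_{a \in \grmul K}\Weil Kd(a) = q$. Likewise,
\[
\sum_{a \in K}\Weil Kd(a)^2 = \card K \sum_{x \in K}\psi_K\!\left(x^d + (-x)^d\right) = q^2,
\]
where the last equality uses that $x^d + (-x)^d = 0$ for every $x$: in characteristic $2$ because $-x=x$ and $2=0$, and in odd characteristic because $\gcd(d,\cardunits K)=1$ forces $d$ to be odd, so $(-x)^d=-x^d$. (These are presumably the computations underlying Lemma~\ref{Prudence} and Corollary~\ref{George}, whose stated relation $\sum_{a}\Weil Kd(a)^2=(\sum_{a}\Weil Kd(a))^2$ is consistent with $q^2=q^2$.) Again $\Weil Kd(0)=0$ lets me restrict the second sum to $\grmul K$.

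Next I would introduce the multiplicities. Writing $q=p^e$ and letting $n_{+}$ and $n_{-}$ denote the number of $a \in \grmul K$ with $\Weil Kd(a)=A$ and $\Weil Kd(a)=-A$ respectively, the two moment identities become
\[
(n_{+} - n_{-})\,A = q, \qquad (n_{+} + n_{-})\,A^2 = q^2.
\]
The first equation shows $A \mid q = p^e$, so $|A| = p^k$ for some integer $k$ with $0 \le k \le e$; in particular the value is a prime power, as claimed. The second then gives $n_{+}+n_{-} = q^2/A^2 = p^{2e-2k}$. Since $n_{+}+n_{-} \le \cardunits K = q-1 < p^e$, we obtain $p^{2e-2k} < p^e$, hence $k > e/2$ and $p^k > \sqrt q$. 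For the upper bound I would invoke three-valuedness: both nonzero values are genuinely attained, so $n_{+}\ge 1$ and $n_{-}\ge 1$, whence $p^{2e-2k}=n_{+}+n_{-}\ge 2$; as $2e-2k$ is even this forces $2e-2k\ge 2$, i.e. $k \le e-1$ and $p^k < q$. Finally $k > e/2 > 0$ shows $k$ is positive.

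The argument is short, and the only step demanding genuine care is the second-moment computation, where one must verify $x^d+(-x)^d=0$ uniformly across characteristics; the coprimality $\gcd(d,\cardunits K)=1$ is exactly what rescues the odd-characteristic case. I would also stress that the symmetry hypothesis is what makes this work: it collapses the problem to the first two moments, whereas an asymmetric three-valued sum would require higher moments and the heavier $p$-divisibility machinery developed later in the paper.
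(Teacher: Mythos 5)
Your proof is correct, and while it shares the paper's power-moment skeleton, the key steps are genuinely different. The paper's own proof (given as Proposition \ref{Michael}) gets the prime-power property from integrality of the count $N_A = (q^2+qA)/(2A^2)$, gets the upper bound $|A| < q$ from Corollary \ref{Rufus} (hence ultimately from Helleseth's Theorem \ref{Aaron}), and gets the lower bound $|A| > \sqrt{q}$ from the \emph{third} power moment, Lemma \ref{Prudence}\eqref{Iris}, which yields the exact identity $|A| = \sqrt{\card{\Roots}\,q}$ together with the observation that $0,-1 \in \Roots$. You use only the first two moments throughout: the identity $(n_{+}-n_{-})A = q$ gives $A \mid q$ directly, which is a cleaner route to $|A| = p^k$, and then $n_{+}+n_{-} = q^2/A^2$ is squeezed between $2$ (both nonzero values are attained, forcing $k \le e-1$ and so $|A| < q$) and $q-1$ (only $q-1$ arguments are available, forcing $k > e/2$ and so $|A| > \sqrt{q}$). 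Your version is thus more elementary and self-contained: beyond Theorem \ref{Natalie} it needs nothing but orthogonality of characters, and it avoids both the third moment and Theorem \ref{Aaron}. What the paper's heavier route buys is the exact formula $|A| = \sqrt{\card{\Roots}\,q}$, where $\Roots$ is the set of roots of $(x+1)^d - x^d - 1$ in $K$; that relation is not needed for Proposition \ref{Herve} itself, but it is the engine of the proof of Theorem \ref{Nancy}, where a lower bound on $\card{\Roots}$ (coming from degeneracy of $d$ over a large subfield) is converted into a lower bound on $|A|$. So your argument proves the stated proposition more economically, while the paper's proves a sharper statement that is reused later.
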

This follows easily from well-known facts, which are arranged in Section \ref{Gerald}, where the above proposition is proved as Proposition \ref{Michael}.

Our first main result shows that in many cases, $\Weil Kd$ cannot be symmetric three-valued.
\begin{theorem}\label{Matilda}
Let $K$ be a finite field, and suppose that $I$ and $J$ are subfields of $K$ with $[J:I]=2$, with $d$ degenerate over $I$ but not over $J$.
Then the set of values assumed by $\Weil Kd(a)$ as $a$  runs through $\grmul K$ is not of the form $\{-A, 0, +A\}$ for any $A$.
\end{theorem}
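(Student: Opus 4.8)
The plan is to assume the value set equals $\{-A,0,+A\}$ and to reach a $p$-divisibility contradiction through the Gauss-sum expansion of $\Weil Kd$. First I would record the multiplicative-Fourier identity: with $\tau(\chi)=\sums{x\in\grmul K}\chi(x)\psi_K(x)$ the Gauss sum of a character $\chi$ of $\grmul K$, and $1/d$ the inverse of $d$ modulo $\cardunits K$, one has for every nontrivial character $\eta$
\[
\widehat W(\eta):=\sums{a\in\grmul K}\Weil Kd(a)\,\eta(a)=\tau\bigl(\overline{\eta^{1/d}}\bigr)\,\tau(\eta),
\]
the only input being that $\chi\mapsto\chi^{d}$ permutes the characters of $\grmul K$. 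By Proposition \ref{Herve} we may write $A=p^{k}$ with $\sqrt q<p^{k}<q$, so that $n/2<k<n$ where $q=p^{n}$; and since every value of $\Weil Kd$ lies in $\{0,\pm p^{k}\}$, we get $p^{k}\mid\Weil Kd(a)$ for all $a$, hence $p^{k}\mid\widehat W(\eta)$ in $\Z[\zeta_{\cardunits K}]$ for each $\eta\neq1$.

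Next I would turn this divisibility into a digit-sum inequality by Stickelberger's theorem. Fix a valuation $\nu$ above $p$, normalized by $\nu(p)=p-1$, and let $\sigma(s)$ denote the sum of the base-$p$ digits of the least nonnegative residue of $s$ modulo $\cardunits K=p^{n}-1$; Stickelberger gives $\nu(\tau(\omega^{-s}))=\sigma(s)$ for the Teichm\"uller character $\omega$. Taking $\eta=(\omega^{-b})^{d}$ and using that complementation $x\mapsto\cardunits K-x$ replaces $\sigma(x)$ by $n(p-1)-\sigma(x)$, the identity above gives
\[
\nu\bigl(\widehat W(\eta)\bigr)=n(p-1)-\sigma(b)+\sigma(\langle bd\rangle),
\]
where $\langle\,\cdot\,\rangle$ is reduction modulo $\cardunits K$. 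Thus $p^{k}\mid\widehat W(\eta)$ for all $\eta$ forces, for every $b\not\equiv0$,
\[
\sigma(b)-\sigma(\langle bd\rangle)\le(n-k)(p-1)<\tfrac n2(p-1),\qquad(\ast)
\]
the last inequality because $k>n/2$. The theorem then reduces to producing a single $b\not\equiv0$ with $\sigma(b)-\sigma(\langle bd\rangle)\ge\frac n2(p-1)$, in direct conflict with $(\ast)$.

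To build such a $b$ I would first reduce to $K=J$. Writing $t=[K:J]$, the characters of $\grmul K$ that factor through the norm $\norm KJ$ have Gauss sums controlled by Hasse--Davenport, $-\tau_K(\chi\circ\norm KJ)=(-\tau_J(\chi))^{t}$, so along these characters the quantity $\sigma(b)-\sigma(\langle bd\rangle)$ scales by exactly $t$; since the threshold $\frac n2(p-1)=t\cdot m(p-1)$ scales the same way (with $|I|=p^{m}$), a witness over $J$ lifts to one over $K$. Over $J$ I would use the hypotheses directly. Replacing $d$ by $p^{-j}d$ changes neither the value set nor degeneracy, so we may assume $d\equiv1\pmod{|I|-1}$ (degeneracy over $I$) while $d\not\equiv\pm1\pmod{|I|+1}$ (nondegeneracy over $J$, since $p^{m}\equiv-1\pmod{|I|+1}$). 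Writing $d=d_{0}+d_{1}r$ in base $r=p^{m}$, degeneracy over $I$ forces $d_{0}+d_{1}\equiv1\pmod{r-1}$, hence $d_{0}+d_{1}\in\{1,r\}$; the value $1$ yields $d\in\{1,r\}$, both degenerate over $J$ and excluded, so $d_{0}+d_{1}=r$ with $1\le d_{0}\le r-1$.

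The main obstacle is the explicit construction of the witness over $J$. Choosing a low-digit-sum residue $c$ as the target for $\langle bd\rangle$ and setting $b=\langle c/d\rangle$, I must show that the drop $\sigma(b)-\sigma(c)$ reaches $m(p-1)$ for some admissible $c$. The single-digit targets $c\in\{1,r\}$ give drop $(m-\valuation(d_{0}))(p-1)$ and, by the $d\leftrightarrow1/d$ symmetry, its analogue for $1/d$; these already succeed whenever $p\nmid d_{0}$ or $p$ does not divide the corresponding digit of $1/d$. When both digits are divisible by $p$ one is pushed to two-digit targets such as $c=1+p$, and it is precisely the exclusion $d\not\equiv\pm1\pmod{r+1}$ that prevents the digits of $\langle c/d\rangle$ from collapsing and so guarantees the required gap. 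Carrying out this case analysis on the base-$r$ digits of $d$---establishing that nondegeneracy over $J$ always supplies some $c$ with $\sigma(\langle c/d\rangle)-\sigma(c)\ge m(p-1)$---is the heart of the proof; the remaining steps are the formal assembly described above.
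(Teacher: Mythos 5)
Your framework is sound as far as it goes: the Fourier identity $\widehat W(\eta)=\tau\bigl(\overline{\eta^{1/d}}\bigr)\tau(\eta)$ is correct, the Stickelberger translation of $p^k\mid\widehat W(\eta)$ into the digit-sum bound $(\ast)$ is correct, and the Hasse--Davenport scaling argument that reduces the witness problem from $K$ to $J$ works (the threshold and the drop both scale by $t=[K:J]$). But there is a genuine gap exactly where you locate the ``heart'': you never prove that degeneracy over $I$ plus nondegeneracy over $J$ produces a residue $b\not\equiv 0$ with $\sigma(b)-\sigma(\langle bd\rangle)\ge m(p-1)$. What you give is a sketch: the claimed drop $(m-\valuation(d_0))(p-1)$ for the target $c=1$ is asserted without computation, you concede that single-digit targets fail when $p$ divides both relevant digits, and the two-digit case is left as an unexecuted case analysis. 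Since every other step is routine once this lemma is granted, the lemma is not a loose end --- it is the theorem. This is precisely the route of Calderbank--McGuire--Poonen--Rubinstein, who needed a delicate additive-number-theoretic argument at this exact point just for $p=2$; carrying that digit-counting analysis through for arbitrary $p$ and for the relative situation $I\subseteq J$ is a substantial piece of work that your proposal does not contain.

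It is worth seeing how the paper closes this hole, because it does so without Stickelberger at all, and the fix can be transplanted into your write-up. Your missing lemma is equivalent (via your own valuation formula and the identification of $\Val Jd$ with the minimum valuation of the Gauss-sum products) to the bound $\Val Jd\le[I:\Fp]$. The paper obtains the exact value $\Val Jd=[I:\Fp]$ by computing the Weil sums over $J$ directly: taking $d\equiv 1\pmod{\cardunits{I}}$, one gets $\Weil Jd(a)=\card{I}\,(Z(a)-1)$, where $Z(a)$ counts coset representatives $y$ of $\grmul I$ in $\grmul J$ with $\trace JI(y^d+ay)=0$ (Lemma \ref{Frederick}); since $d$ is nondegenerate over $J$, some value must be negative (Corollary \ref{George}), and the only negative value the formula allows is $-\card{I}$, whose valuation is exactly $[I:\Fp]$ (Corollary \ref{subfield}). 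Combining this with the Davenport--Hasse inequality $\Val Kd\le[K:J]\,\Val Jd$ (Corollary \ref{hasse}) and the lower bound $\Val Kd=\valuation(A)>\frac12[K:\Fp]$ from Proposition \ref{Michael} gives the contradiction. If you replace your unproven digit lemma with this Weil-sum computation, your argument becomes complete --- but at that point it coincides with the paper's proof, whose whole purpose was to avoid the Stickelberger/McEliece machinery you are invoking.
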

We prove this in Section \ref{Arthur}.
This means that a field obtained by a tower of quadratic extensions over a prime field can never support a symmetric three-valued sum.
\begin{corollary}\label{Julianna}
Let $K$ be a finite field of characteristic $p$, and suppose that $[K:\Fp]$ is a power of $2$.  
Then the set of values assumed by $\Weil Kd(a)$ as $a$  runs through $\grmul K$ is not of the form $\{-A, 0, +A\}$ for any $A$.
\end{corollary}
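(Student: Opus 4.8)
The plan is to derive the corollary from Theorem \ref{Matilda} by exhibiting, inside the chain of subfields of $K$, a single quadratic step across which the degeneracy of $d$ is lost. I argue by contradiction: suppose the value set of $\Weil Kd$ equals $\{-A,0,+A\}$ for some $A$. First I would dispose of the trivial possibilities. If $A=0$ the value set is $\{0\}$; but when $d$ is nondegenerate over $K$ Theorem \ref{Aaron} forces at least three values, while when $d$ is degenerate over $K$ equation \eqref{Augustine} gives the value set $\{0,\card{K}\}$, so neither case yields $\{0\}$. Thus $A \ne 0$, the three integers $-A,0,+A$ are distinct, and $\Weil Kd$ is three-valued; in particular $d$ must be nondegenerate over $K$, since degeneracy over $K$ would again give the two-element value set $\{0,\card{K}\}$ by \eqref{Augustine}.

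Next I would invoke Theorem \ref{Natalie}: three-valuedness forces $d \equiv 1 \pmod{p-1}$. Because $p \equiv 1 \pmod{p-1}$, this says precisely that $d \equiv p^0 \pmod{\cardunits{\Fp}}$, i.e.\ that $d$ is degenerate over the prime field $\Fp$. So degeneracy of $d$ holds over $\Fp$ but fails over $K$.

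Now I would use the hypothesis that $[K:\Fp]=2^m$ is a power of $2$. The subfields of $K$ are exactly $K_i = \F_{p^{2^i}}$ for $0 \le i \le m$, and since the divisors of $2^m$ are totally ordered by divisibility these form a single chain $\Fp = K_0 \subset K_1 \subset \cdots \subset K_m = K$ with $[K_{i+1}:K_i]=2$. If $m=0$ then $K_0=K_m$, and the two facts just established---that $d$ is degenerate over $K_0$ but nondegenerate over $K_m$---already conflict, so assume $m \ge 1$. The key structural observation is that degeneracy is inherited downward: if $F' \subseteq F$ are subfields then $\cardunits{F'}$ divides $\cardunits{F}$, so $d \equiv p^j \pmod{\cardunits{F}}$ reduces to $d \equiv p^j \pmod{\cardunits{F'}}$, whence $d$ degenerate over $F$ implies $d$ degenerate over $F'$. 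Consequently the indices $i$ at which $d$ is degenerate over $K_i$ form an initial segment of $\{0,1,\dots,m\}$. Since this segment contains $0$ but not $m$, there is a largest index $i_0$ with $0 \le i_0 < m$ for which $d$ is degenerate over $K_{i_0}$ but not over $K_{i_0+1}$. Setting $I=K_{i_0}$ and $J=K_{i_0+1}$ produces subfields with $[J:I]=2$, with $d$ degenerate over $I$ and nondegenerate over $J$.

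Finally I would apply Theorem \ref{Matilda} to the pair $(I,J)$, which asserts that the value set of $\Weil Kd$ is not of the form $\{-A,0,+A\}$, contradicting our assumption. I expect no serious obstacle: all the analytic content is already packaged in Theorems \ref{Aaron}, \ref{Natalie}, and \ref{Matilda}. The only points requiring care are the bookkeeping in the edge cases ($A=0$, the degenerate-over-$K$ case, and $m=0$) and the verification that degeneracy propagates down the subfield lattice, which is exactly what lets me pinpoint the single quadratic step that Theorem \ref{Matilda} requires.
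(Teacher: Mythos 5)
Your proof is correct and is essentially the paper's own argument, elaborated: the paper likewise uses Theorem \ref{Natalie} to get degeneracy of $d$ over $\Fp$, eq.\ \eqref{Augustine} to get nondegeneracy over $K$, the tower of quadratic subfields of $K$ to locate one step where degeneracy is lost, and then Theorem \ref{Matilda} at that step. Your extra bookkeeping (the $A=0$ and $m=0$ edge cases, and the verification that degeneracy passes down to subfields because $\cardunits{F'}$ divides $\cardunits{F}$) just makes explicit what the paper's one-sentence proof leaves implicit.
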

For if $\Weil Kd$ were three-valued, Theorem \ref{Natalie} and eq.\thinspace\eqref{Augustine} would make $d$ degenerate over $\Fp$ but not over $K$, and then as we proceed from $\F_p$ toward $K$ up the tower of quadratic extensions, we must find a step where $d$ passes from degenerate to nondegenerate.
This corollary generalizes a result of Calderbank-McGuire-Poonen-Rubinstein \cite[Theorem 3]{Calderbank-McGuire-Poonen-Rubinstein}.
Our proof is quite different from that of Calderbank et al., who used McEliece's Theorem from coding theory (a relative of Stickelberger's Theorem on the $p$-divisibility of Gauss sums) and a delicate calculation in additive number theory to obtain Corollary \ref{Julianna} in the case where $p=2$.
The proof for Theorem \ref{Matilda} in full generality given here is much more straightforward, and is the consequence of some useful observations about the $p$-adic valuation of Weil sums.
These observations come as a consequence of relations (explored in Section \ref{Thomas}) between Weil and Gauss sums over a field and sums of the same form over a subfield: the Gauss sums play a role since Weil sums can be written in terms of Gauss sums, and the Davenport-Hasse relation supplies the connection between Gauss sums over the field and Gauss sums over the subfield.

Note that if $K$ is a field of characteristic $p$ and order $q=p^e$, with $e$ not equal to a power of $2$, then we can set $i=2^{\twovaluation(e)}$ in the first four rows of Table \ref{Francis} to obtain a $d$ such that $W_{q,d}$ is three-valued.
On the other hand, Table \ref{Francis} furnishes no example of a three-valued $W_{K,d}$ with $[K:\F_p]$ a power of $2$.
(Recall that our table prohibits parameters which make $d$ degenerate, so we cannot have $i$ a multiple of $e$ in the first four rows.)
Helleseth conjectured \cite[Conjecture 5.2]{Helleseth} that for such fields there is no $d$ that makes the Weil sum $W_{K,d}$ three-valued.
\begin{conjecture}[Helleseth, 1976]\label{Wilbur}
Let $K$ be a finite field of characteristic $p$.
If $[K:\Fp]$ is a power of $2$, then $\Weil Kd$ is not three-valued.
\end{conjecture}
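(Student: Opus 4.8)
The statement is Helleseth's conjecture, and since Corollary \ref{Julianna} already disposes of the \emph{symmetric} case for fields whose degree over $\Fp$ is a power of $2$, the natural plan is to reduce the full conjecture to that corollary by proving that over such a field a three-valued Weil sum is forced to be symmetric. First I would record what Theorem \ref{Natalie} already supplies: if $\Weil Kd$ is three-valued then its values are rational integers $\{0,A,B\}$, and by the identity $\sum_{a\in\grmul K}\Weil Kd(a)^2=\bigl(\sum_{a\in\grmul K}\Weil Kd(a)\bigr)^2$ quoted above, one of $A,B$ is positive and the other negative. Writing $q=\card K$ and letting $n_A,n_B,n_0$ be the multiplicities of the values, the two elementary power-moment identities $\sum_{a\in\grmul K}\Weil Kd(a)=q$ and $\sum_{a\in\grmul K}\Weil Kd(a)^2=q^2$ translate into $n_AA+n_BB=q$ and $n_AA^2+n_BB^2=q^2$, together with $n_0+n_A+n_B=q-1$. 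The target is then $A=-B$.

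These three equations do not by themselves pin down $A$ and $B$, so I would bring in the $p$-adic machinery of Sections \ref{Thomas} and \ref{Arthur}. Because $[K:\Fp]$ is a power of $2$, eq.~\eqref{Augustine} together with Theorem \ref{Natalie} forces $d$ to be degenerate over $\Fp$ but nondegenerate over $K$; since degeneracy can only be lost, never regained, as one climbs a tower (a degenerate exponent over $J$ is automatically degenerate over any subfield $I$), there is a unique first quadratic step $I\subset J$ with $[J:I]=2$ at which $d$ passes from degenerate to nondegenerate, which is exactly the situation of Theorem \ref{Matilda}. Over $I$ the Weil, and hence the underlying Gauss, sums are completely explicit by \eqref{Augustine}, and the Davenport--Hasse relation lifts these to $J$, and thence to $K$, with controlled $p$-adic valuation. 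The plan is to use this lift at the critical step $I\subset J$ to show that \emph{every} nonzero value of $\Weil Kd$ is, up to sign, a single fixed power of $p$; this forces $A=-B$, and Corollary \ref{Julianna} then supplies the contradiction.

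The main obstacle is precisely the step that the authors flag as unproven in general: that the two nonzero values have equal magnitude. The valuation estimates coming out of Davenport--Hasse are \emph{lower} bounds on $\valuation(\Weil Kd(a))$, and in the symmetric case these are rigid enough to force $\card A=p^k$ exactly, as in Proposition \ref{Herve}; but in the asymmetric case $A$ and $B$ are only known to be integers satisfying the moment equations, and a common lower bound on their valuations separates neither their exponents nor the question of whether each is an exact prime power. I would therefore expect the crux to be sharpening the subfield analysis at $I\subset J$ so that it pins the valuation of every value of $\Weil Kd$ to a single exact power of $p$, rather than merely bounding it below; closing this gap for all nondegenerate $d$ of $2$-power degree is where essentially all the difficulty lies, and it is what keeps the statement a conjecture.
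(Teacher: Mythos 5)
The statement you were asked to prove is Conjecture \ref{Wilbur}: it is an open problem, and the paper contains no proof of it. The paper's own commentary goes exactly as far as your proposal does: it observes that if three-valued Weil sums were known to be symmetric, the conjecture would follow from Corollary \ref{Julianna}, and it records that the only settled cases, $p=2$ and $p=3$, were obtained in other works by entirely different means (Feng's result that for $p=2$ the value set omits $0$, combined with Katz's theorem that a three-valued $\Weil Kd$ must take the value $0$; and Katz's divisibility work for $p=3$) --- not by establishing symmetry. So your reduction to Corollary \ref{Julianna} is the natural one, and indeed the paper's own remark, and you are right and candid to flag that symmetry of three-valued Weil sums is precisely the unproven step, so that nothing here constitutes a proof.

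One structural caveat about the attack you sketch. You hope to use the Davenport--Hasse/valuation machinery at the critical quadratic step $I \subset J$ to pin every nonzero value of $\Weil Kd$ to a single exact power of $p$, thereby forcing symmetry. But that machinery points the other way: Corollaries \ref{hasse} and \ref{subfield} yield an \emph{upper} bound on the minimum valuation, $\Val Kd \leq [K:J]\cdot [I:\Fp] = \tfrac12 [K:\Fp]$, and the proof of Theorem \ref{Matilda} works by playing this off against the lower bound $\valuation(A) > \tfrac12 [K:\Fp]$ that symmetry would impose via Proposition \ref{Michael}. In other words, over fields of $2$-power degree the paper's valuation tools \emph{refute} symmetry rather than prove it; they constrain only the minimum valuation over all $a$, not the valuation of each individual value, so they cannot force all values to a common magnitude. (Relatedly, your description of these estimates as lower bounds on $\valuation(\Weil Kd(a))$ has the direction of the inequality backwards.) A proof along your lines would therefore need an input genuinely outside this paper's toolkit --- also note that the prime-power shape of the values in Proposition \ref{Michael} is itself derived \emph{from} symmetry, so it cannot be used to establish it. Identifying the symmetry question as the crux is nonetheless exactly right, and is why the statement remains a conjecture.
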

If it were proved that three-valued Weil sums must be symmetric, this would follow from Corollary \ref{Julianna}.
The $p=2$ and $3$ cases of Conjecture \ref{Wilbur} have been proved.
First, Feng \cite[Theorem 2]{Feng} showed that if $p=2$, one could strengthen the conclusion of Corollary \ref{Julianna} to say that the value set is not only non-symmetric, but entirely lacks the value $0$.
Then when Katz \cite[Theorem 1.9]{Katz} proved that a three-valued Weil sum must take the value $0$, Conjecture \ref{Wilbur} was established for $p=2$.
Further work of Katz \cite[Theorem 1.7]{Katz-Divisibility} shows that Conjecture \ref{Wilbur} is also true when $p=3$.

A symmetric three-valued Weil sum is called {\it preferred} if the magnitude of the nonzero values is as small as possible in view of Proposition \ref{Herve}, that is, if the nonzero values are $\pm\sqrt{p q}$ when $q$ is an odd power of $p$, or if the nonzero values are $\pm p\sqrt{q}$ when $q$ is an even power of $p$.
This terminology originates from digital sequence design, wherein smaller magnitude Weil sums of binomials correspond to smaller cross-correlation between a pair of maximal linear recursive sequences, which is desirable.
The known infinite families of preferred three-valued Weil sums can be deduced from Table \ref{Francis} above: the last seven rows furnish preferred Weil sums, and in the first four rows, one must have $\gcd(e,i)=1$ if $e$ is odd, or $\gcd(e,i)=2$ if $e$ is even.

Our second main result is a lower bound on the magnitude of the nonzero values of a symmetric three-valued Weil sum $\Weil Kd$.
This bound grows as the $2$-divisibility of the degree of $K$ over its prime field increases.
\begin{theorem}\label{Nancy}
Let $K$ be the finite field of characteristic $p$ and order $q$.
If $\twovaluation([K:\Fp])=s$ and $\Weil Kd$ is symmetric three-valued with values $0, \pm A$, then $|A| \geq p^{2^{s-1}} \sqrt{q}$.
\end{theorem}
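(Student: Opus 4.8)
The plan is to recast the claim as a lower bound on a single $p$-adic valuation and then to induct on $s=\twovaluation([K:\Fp])$ along the tower of quadratic subfields. Write $q=p^e$ with $e=[K:\Fp]=2^s m$, $m$ odd. By Proposition~\ref{Herve} the hypotheses force $|A|=p^k$ with $\sqrt q<p^k<q$, so $k$ is an integer satisfying $e/2<k<e$, and the inequality $|A|\ge p^{2^{s-1}}\sqrt q$ is exactly $2k-e\ge 2^s$. I would dispose of the two smallest cases immediately from integrality: if $s=0$ then $e$ is odd and $k>e/2$ forces $k\ge(e+1)/2$, while if $s=1$ then $e$ is even and $k>e/2$ forces $k\ge e/2+1$; in either case $2k-e\ge 2^s$. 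So the content is in $s\ge 2$, where the bound $k\ge e/2+1$ coming from parity must be amplified to $k\ge e/2+2^{s-1}$.

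For the inductive step I would pass from $K$ to its unique index-$2$ subfield $K'=\F_{p^{e/2}}$, which has $\twovaluation([K':\Fp])=s-1$. Since $\gcd(d,\cardunits{K'})=1$ follows from $\cardunits{K'}\mid\cardunits{K}$, the sum $\Weil{K'}d$ is a legitimate Weil sum; and because $\Weil Kd$ is symmetric (hence nondegenerate over $K$), Theorem~\ref{Matilda} applied to the pair $(I,J)=(K',K)$ rules out $d$ being degenerate over $K'$, so $\Weil{K'}d$ is nondegenerate as well.

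The heart of the argument is a relation, supplied by the material of Section~\ref{Thomas}, between the minimal $p$-adic valuation $\Val Kd$ of the Weil sum over $K$ (which equals $k$ in the three-valued case) and the corresponding quantity $\Val{K'}d$ over $K'$. Expanding $\Weil Kd(a)$ over $\grmul K$ in Gauss sums writes it as a $\tfrac1{q-1}$-normalized combination of products $\Gauss\chi\,\Gauss{\bar\chi^d}$ weighted by the roots of unity $\chi^d(a)$; for the characters $\chi$ that are lifted from $K'$ through the norm, the Davenport--Hasse relation replaces each Gauss sum over $K$ by the square of a Gauss sum over $K'$, thereby \emph{doubling} its $p$-adic valuation, while Stickelberger's theorem governs the $p$-divisibility of the remaining, non-lifted Gauss sums. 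Combined with the rigid shape $\{0,\pm p^k\}$ of the value set, this should yield the doubling inequality $2\Val Kd-e\ge 2\bigl(2\Val{K'}d-e/2\bigr)$, i.e.\ the excess $2k-e$ over $K$ is at least twice the corresponding excess over $K'$; iterating this doubling down the tower and feeding in the $s\le 1$ base cases then gives $2k-e\ge 2^s$.

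The main obstacle is precisely this doubling step. The clean half --- that the lifted Gauss sums double their valuation --- is immediate from Davenport--Hasse, but the characters of $K$ that do not descend to $K'$ carry no such relation, and the naive Stickelberger estimate runs the \emph{wrong} way: it bounds $\Val Kd$ from above by $2\Val{K'}d$, not below, so that the true valuation $k$ must exceed the raw Stickelberger bound through cancellation, and it is exactly this cancellation that the three-valued hypothesis has to control. A second, related difficulty is that the symmetric three-valued property is only known at the top of the tower, so the descent cannot simply invoke an inductive hypothesis over $K'$; the crux is to show that the symmetry over $K$ rigidifies the restriction of $\Weil Kd$ to $\grmul{K'}$ strongly enough to force the doubling --- for instance by analyzing $\sums{a\in\grmul{K'}}\Weil Kd(a)$ and its higher moments, which collapse to sums over the trace-zero hyperplane of $K$ over $K'$ --- rather than relying on nondegeneracy alone, which by itself does not even guarantee a positive excess at the bottom of the tower.
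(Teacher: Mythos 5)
Your reduction of the claim to $2k-e\geq 2^s$ (with $q=p^e$, $|A|=p^k$), your disposal of the cases $s\leq 1$ by integrality, and your observation that Theorem \ref{Matilda} applied to $(I,J)=(K',K)$ shows $d$ is nondegenerate over the index-$2$ subfield $K'$ are all correct. But the proof does not go through, for the two reasons you yourself flag, and these are genuine gaps rather than technicalities. First, the ``doubling inequality'' $2\Val Kd - e \geq 2\bigl(2\Val{K'}d - e/2\bigr)$, i.e.\ $\Val Kd \geq 2\Val{K'}d$, is never established, and it is the entire engine of your induction. The paper's Corollary \ref{hasse} (Davenport--Hasse) gives exactly the reverse inequality $\Val Kd \leq 2\Val{K'}d$, so your claim would force the exact equality $\Val Kd = 2\Val{K'}d$ --- a strong rigidity statement for which you offer no mechanism beyond the hope that the three-valued hypothesis ``controls the cancellation'' among the non-lifted Gauss sum products. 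Second, even granting the doubling step, the induction cannot close: to conclude $2k-e\geq 2^s$ you need the excess bound $2\Val{K'}d - e/2 \geq 2^{s-1}$ at the next level down, but the hypothesis that would generate it (symmetric three-valuedness) is not inherited by $\Weil{K'}d$, which in general is neither three-valued nor symmetric; nondegeneracy over $K'$ alone yields no lower bound on $\Val{K'}d$, as you concede. So both crucial steps of the plan are open.

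The paper's proof needs neither valuations of Weil sums nor any descent down the tower; the idea you are missing is the \emph{third power moment}. By Lemma \ref{Prudence}\eqref{Iris}, as packaged in Proposition \ref{Michael}, a symmetric three-valued $\Weil Kd$ satisfies $|A| = \sqrt{\card{\Roots}\,q}$, where $\Roots$ is the set of roots in $K$ of $(x+1)^d - x^d - 1$; so bounding $|A|$ from below is the same as exhibiting many roots. Now $d$ is degenerate over $\Fp$ (Theorem \ref{Natalie}), and if $d$ were nondegenerate over $\F_{p^{2^s}}$, some step of the tower $\Fp\subset\F_{p^2}\subset\cdots\subset\F_{p^{2^s}}$ would pass from degenerate to nondegenerate, and Theorem \ref{Matilda} applied to that pair of subfields (inside $K$) would contradict symmetry. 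Hence $d$ is degenerate over the whole subfield $\F_{p^{2^s}}$, so $x^d=x^{p^j}$ there, every element of $\F_{p^{2^s}}$ lies in $\Roots$ by the Frobenius identity $(x+1)^{p^j}=x^{p^j}+1$, and $\card{\Roots}\geq p^{2^s}$ gives $|A|\geq p^{2^{s-1}}\sqrt q$. Note that the productive application of Theorem \ref{Matilda} is \emph{upward}, forcing degeneracy over the subfield whose degree is the full $2$-part of $e$ --- not downward to get nondegeneracy over $K'$, which is true but leads nowhere.
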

We prove this in Section \ref{Benjamin}.
One consequence is that if the degree of $K$ over its prime field is a multiple of $4$, then $\Weil Kd$ cannot be preferred.
\begin{corollary}\label{Hubert}
Let $K$ be the finite field of characteristic $p$ and order $q$.
If $[K:\Fp]\equiv 0\pmod 4$, then the set of values assumed by $\Weil{K}{d}$ as $a$ runs through $\grmul K$ is not of the form $\{0,\pm p\sqrt{q}\}$.
\end{corollary}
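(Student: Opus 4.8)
The plan is to derive this as an immediate numerical consequence of Theorem \ref{Nancy}, arguing by contradiction. First I would unpack the hypothesis $[K:\Fp] \equiv 0 \pmod 4$. This says that $4 \mid [K:\Fp]$, so writing $s = \twovaluation([K:\Fp])$ we have $s \geq 2$. In particular $[K:\Fp]$ is even, so $q$ is an even power of $p$, which is exactly the regime in which $p\sqrt{q}$ is the preferred magnitude; thus the value set $\{0,\pm p\sqrt{q}\}$ is a legitimate candidate to rule out.

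Next I would suppose, toward a contradiction, that the value set of $\Weil Kd$ is $\{0,\pm p\sqrt{q}\}$. This set is symmetric and consists of three distinct values (since $p\sqrt{q} \neq 0$ and $+p\sqrt{q}\neq -p\sqrt{q}$), so $\Weil Kd$ is symmetric three-valued with nonzero values $\pm A$ where $|A| = p\sqrt{q} = p^{1}\sqrt{q}$. I would then invoke Theorem \ref{Nancy} with this value of $s$. Since $s \geq 2$ we have $2^{s-1} \geq 2$, and the theorem yields
\[
|A| \;\geq\; p^{2^{s-1}}\sqrt{q} \;\geq\; p^{2}\sqrt{q}.
\]
But $p \geq 2$ forces $p^{2} > p$, hence $p^{2}\sqrt{q} > p\sqrt{q} = |A|$, which contradicts the displayed inequality. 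Therefore the value set cannot be $\{0,\pm p\sqrt{q}\}$, as claimed.

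I do not expect any genuine obstacle here: the entire analytic and Galois-theoretic content is already absorbed into Theorem \ref{Nancy}, and the corollary is a one-line arithmetic deduction. The only point deserving a moment's care is the comparison of exponents, namely that $2^{s-1} \geq 2 > 1$ whenever $s \geq 2$; this is immediate, and together with $p \geq 2$ it gives the strict inequality $p^{2^{s-1}}\sqrt{q} > p\sqrt{q}$ that drives the contradiction.
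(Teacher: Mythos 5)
Your proof is correct and follows exactly the route the paper intends: Corollary \ref{Hubert} is stated as an immediate consequence of Theorem \ref{Nancy}, obtained by noting that $[K:\Fp]\equiv 0 \pmod 4$ gives $s = \twovaluation([K:\Fp]) \geq 2$, so any symmetric three-valued $\Weil Kd$ would have $|A| \geq p^{2^{s-1}}\sqrt{q} \geq p^2\sqrt{q} > p\sqrt{q}$. Nothing is missing; the verification that the hypothesized value set is symmetric three-valued and the exponent comparison are exactly the right (and only) points to check.
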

This generalizes the result of Calderbank-McGuire \cite{Calderbank-McGuire}, who proved a conjecture of Sarwate and Pursley \cite[p.~603]{Sarwate-Pursley}, which is the special case of Corollary \ref{Hubert} where $p=2$.
Our proof technique for Theorem \ref{Nancy} in full generality is much simpler than the original proof of Calderbank-McGuire, as it obviates the need for McEliece's Theorem or Stickelberger's Theorem.

Our first two results give restrictions on the types of fields that support symmetric and preferred Weil sums.
Our third result shows that certain exponents $d$ of the polynomial in the Weil sum prevent the Weil sum from being three-valued at all.
\begin{theorem}\label{Priscilla}
Let $K$ be a finite field of characteristic $p$ with $[K:\Fp]$ even.  If $d$ is a power of $p$ modulo $\sqrt{\card{K}}-1$, then $\Weil Kd$ is not three-valued.
\end{theorem}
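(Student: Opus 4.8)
The plan is to make $\Weil Kd(a)$ completely explicit as an integer multiple of $\sqrt{q}$, where $q=\card K$, by exploiting the scaling action of the subfield, and then to show that a three-valued spectrum is combinatorially impossible. Since $[K:\Fp]$ is even, let $F$ be the subfield of $K$ of order $\sqrt q$, so that $[K:F]=2$ and $\cardunits F=\sqrt q-1$; the hypothesis says precisely that $d$ is degenerate over $F$. As $\Weil K{pd}$ has the same value set as $\Weil Kd$, I may multiply $d$ by a power of $p$ modulo $\cardunits K$ and assume $d\equiv 1\pmod{\sqrt q-1}$, so that $\lambda^d=\lambda$ for every $\lambda\in\grmul F$, whence $(\lambda x)^d=\lambda x^d$ for all $\lambda\in\grmul F$ and $x\in K$. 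If $d$ is degenerate over $K$ then \eqref{Augustine} makes $\Weil Kd$ two-valued and we are done, so assume henceforth that $d$ is nondegenerate over $K$.

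First I would partition $\grmul K$ into the $\sqrt q+1$ orbits of the multiplicative action of $\grmul F$, each of size $\sqrt q-1$. Writing $\psi_K=\psi_F\circ\trace KF$ for the canonical character $\psi_F$ of $F$ and using $(\lambda x)^d=\lambda x^d$, the sum of $\psi_K(x^d+ax)$ over an orbit $\grmul F x$ becomes $\sum_{\lambda\in\grmul F}\psi_F(\lambda\trace KF(x^d+ax))$, which equals $\sqrt q-1$ when $\trace KF(x^d+ax)=0$ and $-1$ otherwise. Summing over orbits and restoring the term $x=0$ yields the identity $\Weil Kd(a)=(Z(a)-1)\sqrt q$, where $Z(a)$ is the number of orbits $\grmul F x\subseteq\grmul K$ on which $\trace KF(x^d+ax)$ vanishes (well defined since $\trace KF((\lambda x)^d+a\lambda x)=\lambda\trace KF(x^d+ax)$). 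As $0\le Z(a)\le\sqrt q+1$, every value of $\Weil Kd$ is an integer multiple of $\sqrt q$ lying in $[-\sqrt q,q]$.

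Now suppose $\Weil Kd$ is three-valued. By Theorem \ref{Natalie} the values are rational integers, one of them $0$, and the two nonzero values have opposite signs, since $\sum_{a\in\grmul K}\Weil Kd(a)^2=(\sum_{a\in\grmul K}\Weil Kd(a))^2$. Since $-\sqrt q$ is the least value possible and every value is a multiple of $\sqrt q$, the negative value must be exactly $-\sqrt q$; write the values as $-\sqrt q,0,w\sqrt q$ with $w\ge 1$. I would then read $Z$ geometrically: the condition on an orbit $\grmul F x$ is the single $F$-linear equation $\trace KF(ax)=-\trace KF(x^d)$ in $a$, whose solution set is an affine line in $K\cong F^2$, and the $\sqrt q+1$ resulting lines realize all $\sqrt q+1$ directions (one per orbit). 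Thus $Z(a)$ counts the lines through $a$, any two of which meet in a single point because their directions differ. Counting incidences along one such line $G$, the other $\sqrt q$ lines meet it in $\sqrt q$ distinct points, so $\sum_{a\in G}(Z(a)-1)=\sqrt q$; since $Z\in\{1,w+1\}$ on $G$, this forces $w\mid\sqrt q$ (so $w$ is a power of $p$) and exactly $\sqrt q/w$ points of $G$ with $Z=w+1$.

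When $w=1$ every point of every line would be such a point, giving $Z\in\{0,2\}$ throughout; but $\Weil Kd(0)=0$ forces $Z(0)=1$, a contradiction (this symmetric case is also excluded by Proposition \ref{Herve}, as the value $\sqrt q$ violates the strict bound $\sqrt q<p^k$ there). The remaining case $w=p^a$ with $a\ge 1$ and $p^a<\sqrt q$ is where I expect the real difficulty: it is consistent with both power moments and with the per-line count, so ruling it out requires more. Here every intersection point carries exactly $p^a+1$ lines, so the sets of concurrent directions form a Steiner system $S(2,\,p^a+1,\,\sqrt q+1)$ on the $\sqrt q+1$ directions, and the number of concurrent triples is pinned to $\tfrac16(p^a-1)\sqrt q(\sqrt q+1)$. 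Since three lines $G_{x_1},G_{x_2},G_{x_3}$ are concurrent exactly when $\trace KF$ is additive on the corresponding combination of $d$-th powers, this count measures the failure of $x\mapsto\trace KF(x^d)$ to be $F$-linear; nondegeneracy of $d$ over $K$ guarantees this map is genuinely nonlinear. The crux of the proof—and the step I expect to be hardest—is to convert that nonlinearity into a count of concurrences incompatible with the rigid value above, equivalently to show that such a Steiner system cannot be realized by an arrangement of lines meeting every direction exactly once.
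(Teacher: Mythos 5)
Your setup is sound and in fact reproduces the paper's own Lemma \ref{Frederick}: the orbit decomposition under $\grmul F$ gives $\Weil Kd(a)=(Z(a)-1)\sqrt{q}$, and your deductions that the negative value must be $-\sqrt{q}$, that the value set must be $\{-\sqrt{q},0,w\sqrt{q}\}$ with $w\mid\sqrt{q}$, and that $w=1$ is impossible, are all correct. (One slip: the other $\sqrt{q}$ lines do \emph{not} meet a given line $G$ in distinct points --- if they did, $Z\in\{1,2\}$ on $G$ and $w=1$ would be forced immediately; what is true, and all you use, is that each meets $G$ exactly once, so the incidence sum $\sum_{a\in G}(Z(a)-1)=\sqrt{q}$ still holds.) But the proposal then stops exactly where the theorem lives: the case $w=p^a$ with $a\geq 1$, i.e.\ value set $\{-\sqrt{q},0,p^a\sqrt{q}\}$, is left as an unproved assertion about concurrence counts and Steiner systems. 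This is a genuine gap, not a deferred computation. All the incidence numerology you can extract (point counts $n_0,n_1,n_{w+1}$, pair counts, the per-line count) is internally consistent for every $w\mid\sqrt{q}$, so no purely combinatorial counting of the kind you sketch can distinguish $w=p^a$ from the degenerate configuration $w=\sqrt{q}$ (all lines concurrent), which actually occurs. One must inject arithmetic information about $d$ beyond ``$\trace KF(x^d)$ is not $F$-linear,'' and your proposal has no mechanism for doing so quantitatively.

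For comparison, the paper closes precisely this case by restricting to the subfield $F$ and using three inputs your proposal lacks. First, $\Weil Kd(a)\geq 0$ for all $a\in F$ (Corollary \ref{William}; in your language, $Z(a)\geq 1$ for $a\in F$, because in odd characteristic any $y$ with $y^2\in F$, $y\notin F$ satisfies $\trace KF(y^d+ay)=0$). Second, for $p$ odd, the orbits of $\Gal(K/F)$ on $K\smallsetminus F$ have even size, which yields the congruence $\Weil Kd(a)\equiv \Weil Fd(a)\pmod 2$ (Lemma \ref{Chiara}); since $d$ is degenerate over $F$, this means $\Weil Kd(-1)$ is odd while $\Weil Kd(a)$ is even for $a\in F\smallsetminus\{-1\}$ (Corollary \ref{David}). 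As the only available nonnegative values are $0$ (even) and $w\sqrt{q}$ (odd, since $p$ is odd), this pins down $\Weil Kd(a)=0$ on $F\smallsetminus\{-1\}$ and $\Weil Kd(-1)=w\sqrt{q}$. Third, the subfield-restricted first power moment $\sum_{a\in\grmul F}\Weil Kd(a)=\card{K}$ (Lemma \ref{Vladislav}) then forces $w\sqrt{q}=q$, i.e.\ $w=\sqrt{q}$, contradicting Corollary \ref{Rufus}. Note also that this parity argument requires $p$ odd; the characteristic-$2$ case needs a separate treatment, which the paper handles by citing Charpin's theorem, whereas your proposal makes no distinction of characteristic. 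Your geometric picture is compatible with, and could be rewritten in terms of, this argument, but the decisive ingredients --- the mod-$2$ Galois-orbit congruence and the restricted moment over $F$ --- are exactly what is missing from your proof.
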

In other words, it is impossible for $\Weil Kd$ to be three-valued if $K$ is the quadratic extension of a field $F$ in which $d$ is degenerate.
We prove this in Section \ref{Clarence}.
Such an exponent $d$ is called a {\it Niho exponent}, since they were first studied by Niho in \cite{Niho}.
Theorem \ref{Priscilla} generalizes the result of Charpin \cite[Theorem 2]{Charpin}, who proved the $p=2$ case.
Some steps of Charpin's proof for characteristic two do not hold in odd characteristic, so new arguments are devised.

Finally, the techniques developed here can be used to simplify the proof that the values of a three-valued Weil sum must be rational integers, a result that appears above in Theorem \ref{Natalie}, and which originally appeared in \cite[Theorem 1.7]{Katz}.
The new proof is presented in Section \ref{Lawrence}.

Our proofs of all the above results make extensive use of Galois theory.
Since Weil sums connect calculations in finite fields to calculations in cyclotomic extensions of $\Q$, there are two realms, both cyclotomic, where Galois groups come into play.
On the one hand, there are Galois groups for finite fields, which act on the terms of the polynomial arguments of the characters in the Weil sums; this is explored in Section \ref{Lester}.
On the other hand, there are Galois groups for cyclotomic fields, which are applied to the values of the Weil sums; this is explored in Section \ref{Penelope}.
This dual Galois-theoretic approach has proved to be both powerful for obtaining new results, and at the same time, simplifies the proofs of previous results that we recapitulate.

We should note that Weil sums assuming four, five, or more values are also studied (see \cite[Theorems 2.2, 2.3, 4.8, 4.10, 4.11, 4.13]{Helleseth} for some examples), but we focus on the three-valued ones, as they are extremal in view of Theorem \ref{Aaron}.
It has been asked \cite[Problem 3.6]{Katz-Langevin-New} whether there is an analogue of Theorem \ref{Natalie} for four-valued Weil sums.
Four-valued Weil sums $W_{K,d}(a)$ are known that assume irrational values and do not assume the value $0$ for $a \in \grmul K$.
For example, if $K$ is the field with $5$ elements and $d=3$, then $W_{K,d}(a)$ assumes four distinct irrational values ($\pm\sqrt{5}$ and $(5\pm\sqrt{5})/2$) as $a$ runs through $\grmul K$.
Thus any analogue of Theorem \ref{Natalie} for four-valued sums would need to be significantly different from the original.

The organization of this paper is as follows: in Section \ref{Gerald}, we prove some preliminary results using the well-known methodology of power moments.
In Section \ref{Lester}, we explore the action of the Galois groups of finite fields on the terms inside the Weil sums.
In Section \ref{Thomas}, we look at the Fourier transform of the value set of our Weil sums, which is expressible in terms of Gauss sums, from which we deduce results about the $p$-adic valuation of Weil sum values.
In Section \ref{Penelope}, we explore the action of the Galois groups of cyclotomic fields on the values of the Weil sums.
In Sections \ref{Arthur}, \ref{Benjamin}, and \ref{Clarence}, we prove Theorems \ref{Matilda}, \ref{Nancy}, and \ref{Priscilla}, respectively.
In Section \ref{Lawrence}, we finish with our new simpler proof of the rationality of the values of three-valued Weil sums.

\section{Power Moments of Weil Sums}\label{Gerald}

In this section we state some of the basic results about Weil sums that will be useful later on.
These facts are proved using character sums known as power moments.
Recall the definition \eqref{James} of $\Weil Kd$, and our tacit insistence that $\gcd(d,\cardunits{K})=1$ whenever we write $\Weil Kd$.
The $m$th {\it power moment} of the Weil sum $\Weil Kd$ is the sum
\[
\sum_{a \in \grmul K} \Weil Kd(a)^m.
\]
The first few power moments can be calculated as straightforward character sums.
\begin{lemma}\label{Prudence}
Let $K$ be a finite field.  Then
\begin{enumerate}[{\indent \rm (i).}]
\item\label{Edgar} $\sum_{a \in \grmul K} \Weil Kd(a)=\card{K}$,
\item\label{Therese} $\sum_{a \in \grmul K} \Weil Kd(a)^2=\card{K}^2$, and
\item\label{Iris} $\sum_{a \in \grmul K} \Weil Kd(a)^3=\card{K}^2 \cdot \card{\Roots}$,
\end{enumerate}
where $\Roots$ is the set of roots of the polynomial $(x+1)^d-x^d-1$ in $K$.
\end{lemma}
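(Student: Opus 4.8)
**

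The plan is to compute the three power moments directly as character sums by expanding the powers of the Weil sum and interchanging orders of summation.

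For each moment I would substitute the definition $\Weil Kd(a) = \sum_{x \in K} \psi_K(x^d + ax)$ and expand $\Weil Kd(a)^m$ as an $m$-fold sum over variables $x_1, \ldots, x_m \in K$. The summand becomes $\psi_K\!\left(\sum_i x_i^d + a \sum_i x_i\right)$. The crucial move is to interchange the sum over $a \in \grmul K$ with the sum over the $x_i$, and to isolate the $a$-dependence. Writing $T = \sum_i x_i$ (the sum of the linear arguments), the factor depending on $a$ is $\psi_K(aT)$, so I would first sum over $a \in \grmul K$. Here I would use the standard orthogonality fact that $\sum_{a \in K} \psi_K(aT)$ equals $\card K$ if $T=0$ and $0$ otherwise; since we sum over $\grmul K$ rather than all of $K$, I would account for the $a=0$ term separately, using $\sum_{a \in \grmul K}\psi_K(aT) = \card K \cdot [T=0] - 1$.

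For the first moment ($m=1$), this leaves $\sum_{x \in K}\psi_K(x^d)\bigl(\card K \cdot [x=0] - 1\bigr)$; the first part contributes $\card K$ (from $x=0$), and the second part is $-\sum_{x} \psi_K(x^d)$, which vanishes because $x \mapsto x^d$ permutes $K$ and the full character sum over $K$ is zero. This gives (i). For the second moment, the condition becomes $x_1 + x_2 = 0$, i.e.\ $x_2 = -x_1$; after substituting, the monomial terms $x_1^d + x_2^d$ must be tracked, and the combinatorics collapse to $\card K^2$ after again discarding the lower-order correction via the permutation property. For the third moment, the orthogonality forces $x_1 + x_2 + x_3 = 0$, and after substituting $x_3 = -(x_1+x_2)$ the surviving exponential is $\psi_K\!\left(x_1^d + x_2^d - (x_1+x_2)^d\right)$; counting the solutions reduces, after a change of variables, to counting roots of $(x+1)^d - x^d - 1$, which is exactly $\card R$. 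This yields the factor $\card K^2 \cdot \card R$ in (iii).

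The main obstacle, and the place requiring the most care, is the bookkeeping in the third moment: correctly normalizing the two free variables $x_1, x_2$ so that the exponent matches the homogeneous binomial $(x+1)^d - x^d - 1$, and confirming that the number of $(x_1,x_2)$ pairs producing a vanishing exponent is precisely $\card K \cdot \card R$ (one factor of $\card K$ from a scaling freedom, since $d$ is coprime to $\cardunits K$), so that together with the outer $\card K$ from orthogonality we recover $\card K^2 \cdot \card R$. I would also need to verify carefully that the $-1$ corrections arising from summing over $\grmul K$ instead of all of $K$ always reduce to full character sums over $K$ that vanish by the permutation property of $x \mapsto x^d$, so they contribute nothing to any of the three moments.
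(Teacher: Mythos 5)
Your overall strategy---expand the $m$th power, swap summation, apply orthogonality in $a$ via $\sum_{a\in\grmul{K}}\psi_K(aT)=\card{K}\cdot[T=0]-1$, and use the permutation property of $x\mapsto x^d$ to kill the correction terms---is the standard computation and is sound; the paper itself gives no argument here, only the citation to \cite[Proposition 3.1]{Katz}, whose proof runs along exactly these lines. Parts (i) and (ii) go through as you describe, with one detail you leave tacit: in (ii) and (iii) you need $(-x)^d=-x^d$, i.e.\ that $d$ is odd when the characteristic $p$ is odd, which follows from $\gcd(d,\cardunits{K})=1$ because $\cardunits{K}$ is then even (in characteristic $2$ the signs are immaterial).

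However, your bookkeeping plan for (iii) contains a genuine error, and it sits exactly at the spot you flag as delicate. After orthogonality, the quantity to evaluate is $\card{K}\cdot S$ where $S=\sum_{x_1,x_2\in K}\psi_K\bigl(x_1^d+x_2^d-(x_1+x_2)^d\bigr)$, and $S$ is \emph{not} simply the number of pairs with vanishing exponent: the pairs with nonvanishing exponent contribute roots of unity, not zero, and cannot be discarded. Moreover the vanishing-pair count itself is not $\card{K}\cdot\card{\Roots}$: writing $x_1=ux_2$ for $x_2\ne 0$, the exponent becomes $x_2^d\bigl(u^d+1-(u+1)^d\bigr)$, so the vanishing pairs are the $\card{K}$ pairs with $x_2=0$ together with $(\card{K}-1)\card{\Roots}$ pairs having $x_2\ne 0$ and $u\in\Roots$, a total of $\card{K}+(\card{K}-1)\card{\Roots}$. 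The correct accounting requires one more use of the permutation property: for each of the $\card{K}-\card{\Roots}$ values $u\notin\Roots$, the map $x_2\mapsto x_2^d c_u$ (with $c_u=u^d+1-(u+1)^d\ne 0$) permutes $\grmul{K}$, so $\sum_{x_2\in\grmul{K}}\psi_K(x_2^d c_u)=-1$. Hence $S=\card{K}+(\card{K}-1)\card{\Roots}-(\card{K}-\card{\Roots})=\card{K}\cdot\card{\Roots}$, which recovers (iii). Note that your two misstatements (vanishing count equal to $\card{K}\card{\Roots}$; nonvanishing terms contributing $0$) happen to cancel and land on the correct value of $S$, but as written the step would not survive scrutiny---you need the $-1$ per nonvanishing scaling class to make the identity true.
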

\begin{proof}
See \cite[Proposition 3.1]{Katz}.
\end{proof}
\begin{corollary}\label{Rufus}
If $K$ is a finite field, and $d$ is nondegenerate over $K$, then $|\Weil Kd(a)| < \card{K}$ for all $a \in \grmul K$.
\end{corollary}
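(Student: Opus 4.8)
The plan is to prove the non‑strict bound $\card{\Weil Kd(a)} \le \card K$ for every $a$ by the triangle inequality, and then to show that the nondegeneracy of $d$ forbids equality. First I would observe that $\Weil Kd(a) = \sum_{x \in K}\psi_K(x^d + a x)$ is a sum of $\card K$ complex numbers of modulus $1$, so $\card{\Weil Kd(a)} \le \card K$ is automatic. The entire content of the corollary is thus the \emph{strictness}, and I would obtain it contrapositively: assume that strictness fails, that is, $\card{\Weil Kd(a_0)} = \card K$ for some $a_0 \in \grmul K$ (necessarily an equality, since $\card{\Weil Kd(a_0)} \le \card K$ always holds), and deduce that $d$ is degenerate over $K$.

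Next I would exploit the equality case of the triangle inequality. Since the summands $\psi_K(x^d + a_0 x)$ all lie on the unit circle, their sum can have modulus $\card K$ only if they are all equal; evaluating the summand at $x = 0$ gives $\psi_K(0) = 1$, so in fact $\psi_K(x^d + a_0 x) = 1$ for every $x \in K$. Recalling $\psi_K(y) = \exp(2 i \pi\,\trace K{\Fp}(y)/p)$, this is equivalent to the identity $\trace K{\Fp}(x^d + a_0 x) = 0$ for all $x \in K$; equivalently, the map $x \mapsto \trace K{\Fp}(x^d)$ agrees on $K$ with the $\Fp$‑linear map $x \mapsto \trace K{\Fp}(-a_0 x)$.

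The main work, and the step I expect to be the crux, is to convert this functional identity into the arithmetic statement $d \equiv p^j \pmod{\cardunits K}$ for some $j$. To do this I would write the absolute trace as $\trace K{\Fp}(y) = \sum_{j=0}^{e-1} y^{p^j}$, where $e = [K : \Fp]$ and $q = \card K = p^e$ (so $\cardunits K = q - 1$), and use the Frobenius identity $(x^d + a_0 x)^{p^j} = x^{d p^j} + a_0^{p^j} x^{p^j}$ to exhibit $\trace K{\Fp}(x^d + a_0 x)$ as an explicit polynomial $G(x) \in K[x]$ that vanishes at every point of $K$. Reducing $G$ modulo $x^q - x$ yields a polynomial of degree $< q$ that also vanishes on all $q$ points of $K$ and is therefore the zero polynomial. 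I would then track exponents modulo $q - 1$: because $\gcd(d, q-1) = 1$, the exponents $d, d p, \dots, d p^{e-1}$ are pairwise distinct modulo $q-1$, so after reduction the residue class of $d$ carries coefficient $1$ with no cancellation among the $x^{d p^j}$ terms. The only possible source of cancellation is a linear‑part exponent $p^j$, and this occurs precisely when $d \equiv p^j \pmod{q-1}$, i.e. precisely when $d$ is degenerate. Since the reduced polynomial is zero, such a cancellation must occur, forcing degeneracy and completing the contrapositive.

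I expect the bookkeeping of exponents modulo $q-1$ to be the only delicate point: one must check that reduction modulo $x^q - x$ keeps all the relevant exponents in $\{1, \dots, q-1\}$, so that no monomial collapses to a constant, and that the coefficient $1$ of the surviving term, being the image of $1 \in K$, is genuinely nonzero in characteristic $p$. The degenerate field $q = 2$ can be dismissed at the outset, since there $d$ is automatically degenerate and the hypothesis is vacuous. Notably, no later results are needed, and even the reality of the Weil sums is not used: the argument rests only on the unit‑modulus summands and the value of the summand at $x = 0$.
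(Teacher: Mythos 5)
Your proof is correct, but it takes a genuinely different route from the paper's. The paper's own proof of Corollary \ref{Rufus} is a one-step deduction from earlier results: by the second power moment (Lemma \ref{Prudence}\eqref{Therese}), $\sum_{a \in \grmul K} \Weil Kd(a)^2 = \card{K}^2$, so a value of magnitude $\card{K}$ at some $b \in \grmul K$ would force $\Weil Kd(a)=0$ for every other $a$, making the sum two-valued and contradicting Theorem \ref{Aaron} (Helleseth's theorem that nondegenerate exponents give at least three values). You instead analyze the equality case of the triangle inequality directly: equality at $a_0$ forces all the unit-modulus summands to coincide with the $x=0$ summand, hence $\trace K\Fp(x^d + a_0 x)=0$ identically on $K$, and then a linearized-polynomial argument --- reduce $\sum_{j}\bigl(x^{d p^j} + a_0^{p^j} x^{p^j}\bigr)$ modulo $x^q - x$, count roots to conclude the reduction is the zero polynomial, and track exponents modulo $q-1$ --- shows that $d \equiv p^j \pmod{q-1}$ for some $j$, i.e., that $d$ is degenerate. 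Your bookkeeping is sound: for $q>2$ the order of $p$ modulo $q-1$ is exactly $[K:\Fp]$, so the residues $d p^j$ are pairwise distinct and the residue class of $d$ retains coefficient $1$ unless it collides with some $p^j$, and you rightly dismiss $q=2$ as vacuous (one could quibble that the distinctness of the $d p^j$ rests on this order computation rather than on $\gcd(d,q-1)=1$ alone, but you handle exactly the case where it could fail). The trade-off: your argument is self-contained and more elementary, needing neither Theorem \ref{Aaron} (a nontrivial imported result) nor the power moments nor even the reality of the Weil sum values, and it in effect reproves from scratch the statement that a trivial character sum forces a degenerate exponent; the paper's proof buys brevity and stays inside the power-moment framework that Section \ref{Gerald} develops anyway.
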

\begin{proof}
From Lemma \ref{Prudence}\eqref{Therese}, the only way to escape this conclusion would be to have $|\Weil Kd(b)|=\card{K}$ for some $b \in \grmul K$, and $\Weil Kd(a)=0$ for all other $a$, which would make the Weil sum two-valued, contrary to Theorem \ref{Aaron}.
\end{proof}
\begin{corollary}\label{George}
If $d$ is nondegenerate over $K$, then $\Weil Kd$ assumes at least one positive value and at least one negative value.
\end{corollary}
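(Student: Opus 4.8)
The plan is to read both conclusions off the first two power moments in Lemma \ref{Prudence}, using the fact (recorded just after \eqref{James}) that every $\Weil Kd(a)$ is a real number. The existence of a positive value is immediate: Lemma \ref{Prudence}\eqref{Edgar} gives $\sum_{a \in \grmul K} \Weil Kd(a) = \card K > 0$, and a sum of real numbers can be positive only if at least one summand is positive.

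For the negative value I would exploit the identity highlighted in the introduction, namely that Lemma \ref{Prudence}\eqref{Edgar} and \eqref{Therese} together give
\[
\sum_{a \in \grmul K} \Weil Kd(a)^2 = \card K^2 = \Bigl(\sum_{a \in \grmul K} \Weil Kd(a)\Bigr)^2 .
\]
Now suppose, for contradiction, that $\Weil Kd(a) \geq 0$ for every $a \in \grmul K$. For nonnegative reals the elementary inequality $\bigl(\sum x_a\bigr)^2 = \sum x_a^2 + 2\sum_{a<b} x_a x_b \geq \sum x_a^2$ holds, with equality precisely when at most one of the $x_a$ is nonzero. The displayed equation forces this equality case, so at most one value $\Weil Kd(a)$ is nonzero; the value set of $\Weil Kd$ is then contained in $\{0,c\}$ for a single constant $c$, making $\Weil Kd$ at most two-valued. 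This contradicts Theorem \ref{Aaron}, since $d$ is nondegenerate. Hence some value is negative.

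A symmetric remark dispatches the positive value as well, should one prefer a uniform treatment: replacing $\Weil Kd(a)$ by $-\Weil Kd(a)$ leaves both sides of the displayed identity unchanged, so the same equality-case analysis shows that $\Weil Kd(a) \leq 0$ for all $a$ is likewise impossible. I expect the only point requiring care is the equality condition in the sum-of-squares inequality and its translation into the cardinality of the value set; the appeal to nondegeneracy is exactly what rules out the degenerate one-nonzero-value configuration via Theorem \ref{Aaron}. (Alternatively, the negative value can be extracted without the equality-case discussion by forming $\sum_{a \in \grmul K}\Weil Kd(a)\bigl(\card K-\Weil Kd(a)\bigr)$, which the two moments show to vanish, and invoking Corollary \ref{Rufus} to guarantee that each factor $\card K-\Weil Kd(a)$ is strictly positive.)
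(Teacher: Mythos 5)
Your proof is correct and takes essentially the same route as the paper's: both hinge on comparing $\left(\sum_{a \in \grmul K} \Weil Kd(a)\right)^2$ with $\sum_{a \in \grmul K} \Weil Kd(a)^2$ via Lemma \ref{Prudence}\eqref{Edgar},\eqref{Therese}, and on invoking Theorem \ref{Aaron} to rule out the configuration where all nonzero values share a sign. The paper does it in one stroke (at least two same-signed nonzero values would force a strict inequality between those two quantities), whereas you split off the positive value and argue through the equality case; the difference is only one of arrangement.
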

\begin{proof}
Recall that the Weil sum values are real algebraic integers \cite[Theorem 3.1(a)]{Helleseth}.
By Theorem \ref{Aaron}, we know that $\Weil Kd$ must assume at least two nonzero values.
If all the nonzero values it assumes were of the same sign, then $\left(\sum_{a \in \grmul K} \Weil Kd(a)\right)^2 > \sum_{a \in \grmul K} \Weil Kd(a)^2$, contradicting Lemma \ref{Prudence}\eqref{Edgar} and \eqref{Therese}.
\end{proof}
The following is an easy consequence of this power moment analysis, and provides the proof of Proposition \ref{Herve} in the Introduction.
\begin{proposition}\label{Michael}
If $K$ is the finite field of characteristic $p$ and order $q$, and if $\Weil Kd(a)$ is three-valued with values $0$ and $\pm A$, then $d \equiv 1 \pmod{p-1}$ and $|A|=p^k$ for some positive integer $k$.
If $\Roots$ denotes the set of roots of $(x+1)^d-x^d-1$ in $K$, then $\sqrt{q} < \sqrt{\card{\Roots} q} = |A| < q$.
\end{proposition}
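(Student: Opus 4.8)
The plan is to combine Theorem~\ref{Natalie} with a direct bookkeeping of the first three power moments of Lemma~\ref{Prudence}. First I would observe that three-valuedness forces $d$ to be nondegenerate over $K$, since a degenerate $d$ produces by eq.~\eqref{Augustine} the two-element value set $\{0,\card K\}$. Nondegeneracy lets me invoke Theorem~\ref{Natalie}, which gives $d\equiv 1\pmod{p-1}$ and tells me that the nonzero value $A$ is a rational integer; it also makes Corollary~\ref{Rufus} available later. Writing $q=\card K$, let $n_+$, $n_-$, and $n_0$ denote the number of $a\in\grmul K$ for which $\Weil Kd(a)$ equals $+A$, $-A$, and $0$ respectively. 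The three-valued hypothesis guarantees that $A\neq 0$ and that the value $0$ is genuinely attained, so $n_0\geq 1$.

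Next I would rewrite parts (i)--(iii) of Lemma~\ref{Prudence} in terms of these counts, obtaining $A(n_+-n_-)=q$, $A^2(n_++n_-)=q^2$, and $A^3(n_+-n_-)=q^2\,\card\Roots$. Since $A\neq 0$, the first relation forces $n_+-n_-\neq 0$, so I may divide the third relation by the first to get the key identity $A^2=q\,\card\Roots$, that is $|A|=\sqrt{\card\Roots\,q}$, which is the equality claimed in the statement.

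To identify $|A|$ as a power of $p$, I would read from $A(n_+-n_-)=q$ that $A$ divides $q=p^e$ in $\Z$, so $|A|=p^j$ for some $0\leq j\leq e$. For the bounds, the upper estimate $|A|<q$ is exactly Corollary~\ref{Rufus}. For the lower estimate I use the second relation: because $n_0\geq 1$ we have $n_++n_-\leq q-2<q$, whence $A^2=q^2/(n_++n_-)>q$ and $|A|>\sqrt q$. Finally $|A|>\sqrt q\geq\sqrt p>1$ excludes $j=0$, so $|A|=p^k$ for a positive integer $k$, and we obtain the full chain $\sqrt q<\sqrt{\card\Roots\,q}=|A|<q$.

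I do not expect a genuine obstacle: the entire argument is a matter of selecting the right linear combinations of the first three power moments. The only points needing a little care are that the division of the third moment by the first is legitimate precisely because $A\neq 0$ makes $n_+\neq n_-$, and that the strictness of the lower bound $\sqrt q<|A|$ depends on the value $0$ actually occurring (so that $n_0\geq 1$), which is part of the three-valued hypothesis.
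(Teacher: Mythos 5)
Your proposal is correct, and it follows the same skeleton as the paper's proof: Theorem~\ref{Natalie} for rationality and the congruence on $d$, the power moments of Lemma~\ref{Prudence}, and Corollary~\ref{Rufus} for the upper bound. But two of your sub-arguments are genuinely different from (and a bit cleaner than) the paper's. For the claim that $|A|$ is a power of $p$, the paper forms the mixed moment $\sum_{a\in\grmul K}\Weil Kd(a)\bigl(\Weil Kd(a)+A\bigr)=2A^2N_A$, equates it to $q^2+qA$ via moments \eqref{Edgar} and \eqref{Therese}, and argues from the integrality of the count $N_A=(q^2+qA)/(2A^2)$ that no prime other than $p$ can divide $A$; you instead read off $A\mid q$ directly from the first moment $A(n_+-n_-)=q$, which is more elementary. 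For the lower bound, the paper uses the identity $|A|=\sqrt{\card{\Roots}q}$ together with the observation that $0,-1\in\Roots$ (which needs $d$ odd in odd characteristic, via $\gcd(d,q-1)=1$), obtaining the slightly stronger conclusion $|A|\geq\sqrt{2q}$; you instead use the second moment and the fact that $0$ is genuinely attained, so $n_++n_-\leq q-2$ and $A^2=q^2/(n_++n_-)>q$. Your route avoids any parity discussion of $d$ and any root-counting, at the modest cost of proving only the strict inequality $|A|>\sqrt q$ actually demanded by the statement rather than the stronger $\sqrt{2q}$ bound the paper gets for free; conversely, the paper's argument makes visible the structural reason the bound holds, namely that $\Roots$ always contains at least the two elements $0$ and $-1$.
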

\begin{proof}
By Theorem \ref{Natalie}, we must have $A \in \Z$ and $d \equiv 1 \pmod{p-1}$.
Let $N_A$ be the number of $a \in \grmul K$ with $\Weil Kd(a)=A$.
Since the other two values $\Weil Kd(a)$ assumes are $0$ and $-A$, we have $\sum_{a\in \grmul K} \Weil Kd(a) (\Weil Kd(a)+A) = 2 A^2 N_A$,
and by Lemma \ref{Prudence}\eqref{Edgar},\eqref{Therese}, this sum also equals $q^2+q A$, so that $N_A =(q^2+q A)/(2 A^2)$, and so $A$ can not be divisible by any prime other than $p$.
We know $|A|< q$ by Corollary \ref{Rufus}.

Similarly, $\sum_{a \in \grmul K} \Weil Kd(a) (\Weil Kd(a)^2-A^2)=0$, and by Lemma \ref{Prudence}\eqref{Edgar},\eqref{Iris} equals $q^2 \card{\Roots}-q A^2$, so $|A|=\sqrt{\card{\Roots} q}$.
Then note that $0,-1 \in \Roots$.  (This is clear for $p=2$, and for $p$ odd, note that $\gcd(d,q-1)=1$ forces $d$ to be odd.)
Thus $A \geq \sqrt{2 q}$.
\end{proof}
It will also be useful to consider a version of the first power moment of a Weil sum, but where we restrict the summation to a smaller subfield.
\begin{lemma}\label{Vladislav}
Let $K$ be a finite field and let $L$ be the quadratic extension of $K$.
Then 
\[
\sum_{a \in \grmul K} \Weil Ld(a)=\card{L}.
\]
\end{lemma}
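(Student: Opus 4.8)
The plan is to expand the Weil sums by their definition, swap the order of summation, and reduce the inner sum over $a \in \grmul K$ to a character-orthogonality calculation. Writing $q = \card K$ and using that $\psi_L$ is multiplicative on sums, we have
\[
\sum_{a \in \grmul K} \Weil Ld(a) = \sum_{x \in L} \psi_L(x^d) \sum_{a \in \grmul K} \psi_L(a x).
\]
To evaluate the inner sum I would invoke transitivity of the trace: since $a \in K$, we have $\trace L\Fp(ax) = \trace K\Fp(a\,\trace LK(x))$, so $\psi_L(ax) = \psi_K(a\,\trace LK(x))$. Setting $t = \trace LK(x)$, orthogonality of additive characters of $K$ gives $\sum_{a \in \grmul K}\psi_K(at) = q - 1$ when $t = 0$ and $\sum_{a \in \grmul K}\psi_K(at) = -1$ otherwise.

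Recording this dichotomy uniformly as $\sum_{a \in \grmul K}\psi_L(ax) = -1 + q\cdot[\trace LK(x) = 0]$ splits the double sum into
\[
-\sum_{x \in L}\psi_L(x^d) + q\sum_{x \in H}\psi_L(x^d),
\]
where $H = \{x \in L : \trace LK(x) = 0\}$ is the kernel of the $K$-linear trace map; since that map is surjective onto $K$, we have $\card H = q$. The first sum vanishes because $\gcd(d,\cardunits L) = 1$ makes $x \mapsto x^d$ a permutation of $L$, so $\sum_{x \in L}\psi_L(x^d) = \sum_{y \in L}\psi_L(y) = 0$.

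The crux is therefore to show $\sum_{x \in H}\psi_L(x^d) = q$. Here I would make two observations. First, $\psi_L$ is identically $1$ on $H$: for $y \in H$ we get $\trace L\Fp(y) = \trace K\Fp(\trace LK(y)) = 0$, so $\psi_L(y) = 1$. Second, $H$ is closed under $x \mapsto x^d$. Writing the nontrivial element of $\Gal(L/K)$ as Frobenius $x \mapsto x^q$, we have $H = \{x \in L : x^q = -x\}$; then for $x \in H$,
\[
(x^d)^q = (x^q)^d = (-1)^d x^d,
\]
which equals $-x^d$ because $d$ is odd (forced by $\gcd(d,\cardunits L) = 1$ when $p$ is odd) or because $-1 = 1$ when $p = 2$. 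Hence $x^d \in H$, so $\psi_L(x^d) = 1$ for every $x \in H$, and the sum equals $\card H = q$.

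Combining these, the whole expression is $q \cdot q = q^2 = \card L$, as claimed. I expect the only delicate point to be the verification that $x \mapsto x^d$ preserves $H$, where the oddness of $d$ in odd characteristic must be invoked; the remaining steps are routine character-sum manipulations together with the standard facts that the trace is surjective and that a power map coprime to $\cardunits L$ permutes $L$.
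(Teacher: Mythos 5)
Your proof is correct and takes essentially the same route as the paper's: both swap the order of summation, use trace transitivity to reduce the inner sum to a condition on $\trace LK(x)$, and then show the trace-zero kernel is preserved by $x \mapsto x^d$ (invoking the oddness of $d$ in odd characteristic) so that $\psi_L(x^d)=1$ on that kernel. The only cosmetic difference is that the paper adjoins the $a=0$ term (harmless since $\Weil Ld(0)=0$) to get full orthogonality over $K$, whereas you sum over $\grmul K$ and separately kill the resulting term $\sum_{x\in L}\psi_L(x^d)$ via the permutation property of $x\mapsto x^d$.
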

\begin{proof}
Let $q=\card{K}$.  Since $\Weil Ld(0)=0$, we have
\begin{align*}
\sum_{a \in \grmul K} \Weil Ld(a)
& = \sum_{x \in L} \psi_L(x^d) \sum_{a \in K} \psi_K(a \trace LK(x)) \\
& = q \sums{x \in L \\ \trace LK(x)=0} \psi_L(x^d).
\end{align*}
If $x \in L$ with $\trace LK(x)=0$, then $x^q=-x$, so that $\trace LK(x^d)=x^{q d}+x^d=(-x)^d+x^d=0$.  (In odd characteristic, $\gcd(d,q-1)=1$ makes $d$ odd.)
Thus $\sum_{a \in \grmul K} \Weil Ld(a) = q \cdot \card{\{x \in L: \trace LK(x)=0\}}=q^2=\card{L}$.
\end{proof}

\section{Action of Galois Groups of Finite Fields}\label{Lester}

We begin this section by seeing that the automorphisms of a finite field $K$ act trivially with respect to the Weil sum $\Weil Kd(a)$.  As always $\Weil Kd(a)$ is as defined in \eqref{James}, and $\gcd(d,\cardunits{K})=1$ whenever we write $\Weil Kd$.
\begin{lemma}
Let $K$ be a finite field of characteristic $p$.
If $\sigma \in \Gal(K/\Fp)$, then $\Weil Kd(\sigma(a))=\Weil Kd(a)$.
\end{lemma}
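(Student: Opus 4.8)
The plan is to reduce everything to a single standard fact: the canonical additive character $\psi_K$ is invariant under the action of $\Gal(K/\Fp)$. First I would recall that $\Gal(K/\Fp)$ is cyclic, generated by the Frobenius automorphism $x \mapsto x^p$; in particular every $\sigma \in \Gal(K/\Fp)$ is a bijective field automorphism of $K$. The key observation is that the absolute trace is fixed by this action, i.e.\ $\trace K\Fp(\sigma(y)) = \trace K\Fp(y)$ for all $y \in K$, which holds because $\trace K\Fp(y)$ is the sum of the Galois conjugates of $y$ and $\sigma$ merely permutes these conjugates. Since $\psi_K(y) = \exp(2i\pi\,\trace K\Fp(y)/p)$, it follows immediately that $\psi_K(\sigma(y)) = \psi_K(y)$ for every $y \in K$.

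With this invariance in hand, I would make the change of summation variable $x = \sigma(u)$ in the defining expression \eqref{James} for $\Weil Kd(\sigma(a))$. Because $\sigma$ permutes the elements of $K$, the new variable $u$ ranges over all of $K$ as well. Since $\sigma$ is a ring homomorphism, it distributes over the polynomial argument to give $\sigma(u)^d + \sigma(a)\sigma(u) = \sigma(u^d + a u)$, and then applying $\psi_K \circ \sigma = \psi_K$ term by term collapses the sum to $\sum_{u \in K} \psi_K(u^d + a u)$, which is exactly $\Weil Kd(a)$.

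There is no genuine obstacle here, as the whole argument rests on the Galois-invariance of the trace, which is standard; the only point needing the slightest attention is that $\sigma$ commutes with the exponent $d$, i.e.\ $\sigma(u)^d = \sigma(u^d)$, which is automatic because $\sigma$ is a field homomorphism and $d$ is an integer. I would also remark that the lemma uses nothing about $d$ beyond its being an integer exponent; in particular the standing hypothesis $\gcd(d,\cardunits K)=1$ plays no role in this argument.
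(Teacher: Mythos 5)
Your proposal is correct and is essentially the paper's own proof: both rest on the Galois-invariance of the absolute trace (hence of $\psi_K$) followed by the change of variable $x \mapsto \sigma(x)$, which pulls $\sigma$ through the polynomial $x^d + ax$. The only cosmetic difference is the direction of the substitution (you start from $\Weil Kd(\sigma(a))$, the paper from $\Weil Kd(a)$), which changes nothing.
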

\begin{proof}
Since Galois conjugates have the same trace, they have the same character value.  Thus $\Weil Kd(a) = \sum_{x \in K} \psi_K(\sigma(x^d+a x))$, and by reparameterizing with $y=\sigma(x)$, we have $\Weil Kd(a)=\sum_{y \in K} \psi_K(y^d+\sigma(a) y)=\Weil Kd(\sigma(a))$.
\end{proof}
The action of the Galois group also shows that some exponents give equivalent Weil sums. 
\begin{lemma}\label{Geoffrey}
Let $K$ be a finite field of characteristic $p$.
Then $\Weil Kd(a)=\Weil K{p^j d}(a)$ for any $a \in K$ and $j \in \Z$.
\end{lemma}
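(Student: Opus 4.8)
The plan is to exploit that the canonical additive character $\psi_K$ is constant on Galois orbits of $K$, together with the multiplicativity of $\psi_K$ over addition, applied only to the monomial term $x^d$ (and not to the whole argument).

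First I would record the basic invariance $\psi_K(z^p)=\psi_K(z)$ for every $z\in K$: the Frobenius $z\mapsto z^p$ lies in $\Gal(K/\Fp)$, and since the absolute trace $\trace K\Fp$ is invariant under Galois conjugation, the two sides have the same trace and hence the same character value. Iterating gives $\psi_K(z^{p^j})=\psi_K(z)$ for all $j\in\Z$; negative powers are legitimate because $\varphi^{-1}$ is again an automorphism of $K$ (equivalently, $p$ is invertible modulo $\cardunits K=\card K-1$ since $\gcd(p,\cardunits K)=1$, which also guarantees $\gcd(p^j d,\cardunits K)=1$, so that $\Weil K{p^j d}$ is defined).

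The main step is then immediate: in the defining sum for $\Weil K{p^j d}(a)$, use the additivity of $\psi_K$ to split each summand as $\psi_K(x^{p^j d}+ax)=\psi_K(x^{p^j d})\,\psi_K(ax)$. Since $x^{p^j d}=(x^d)^{p^j}$, the invariance above gives $\psi_K(x^{p^j d})=\psi_K(x^d)$. Recombining yields $\psi_K(x^{p^j d})\psi_K(ax)=\psi_K(x^d+ax)$, and summing over $x\in K$ produces $\Weil K{p^j d}(a)=\Weil Kd(a)$. The case $a=0$ is subsumed with no special handling.

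The only real subtlety — the main obstacle to a first attempt — is resisting the temptation to apply the Frobenius to the entire argument $x^d+ax$ at once (for instance by reparameterizing the summation variable). That route merely reproduces the $a$-invariance $\Weil Kd(a)=\Weil Kd(\varphi^j(a))$ from the preceding lemma and leads in circles, because it couples the exponent on the monomial to the linear term. The decisive move is to split off the linear term first via the additive-character identity, and then apply the Galois invariance of $\psi_K$ to the monomial $x^d$ alone.
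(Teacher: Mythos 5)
Your proof is correct and follows essentially the same route as the paper: the paper's one-line argument is precisely that $x^{p^j d}$ is a Galois conjugate of $x^d$, hence $\psi_K(x^{p^j d})=\psi_K(x^d)$, and the terms of the two sums agree after splitting off the linear part by additivity of $\psi_K$. Your write-up merely makes explicit the details (trace invariance under Frobenius, handling of negative $j$, the splitting and recombining step) that the paper leaves implicit.
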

\begin{proof}
This follows immediately from the fact that $x^{p^{j} d}$ is a Galois conjugate of $x^d$, and so $\psi_K(x^{p^j d})=\psi_K(x^d)$.
\end{proof}
Now we use finite field automorphisms to prove a congruence between the Weil sum over a field and the Weil sum over its extensions.
\begin{lemma}\label{Chiara}
Let $K$ be a finite field of characteristic $p$, and let $L$ be an extension of $K$ with $[L:K]$ a power of a prime $\ell$ distinct from $p$.  Then for any $a \in K$, we have 
\[
\Weil Ld(a) \equiv \Weil Kd([L:K]^{1-1/d} a) \pmod{\ell},
\]
where $1/d$ indicates the multiplicative inverse of $d$ modulo $p-1$.
\end{lemma}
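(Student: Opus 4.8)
The plan is to exploit the action of $\Gal(L/K)$ on the sum $\sum_{x \in L} \psi_L(x^d + a x)$ defining $\Weil Ld(a)$, reducing it modulo $\ell$ to a sum over the fixed field $K$, and then to recognize the resulting sum over $K$ as a reparameterized Weil sum. First I would check that the summand $x \mapsto \psi_L(x^d + a x)$ is invariant under $\Gal(L/K)$. For $\sigma \in \Gal(L/K)$ we have $\sigma(x)^d = \sigma(x^d)$ and, since $a \in K$ is fixed by $\sigma$, also $a\,\sigma(x) = \sigma(a x)$; moreover $\sigma$ lies in $\Gal(L/\Fp)$ and hence preserves the absolute trace, so $\psi_L(\sigma(z)) = \psi_L(z)$ for all $z \in L$. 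Therefore $\psi_L(\sigma(x)^d + a\,\sigma(x)) = \psi_L(x^d + a x)$, and the summand is constant on $\Gal(L/K)$-orbits.

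Next, since $\Gal(L/K)$ is cyclic of order $[L:K] = \ell^m$, the orbit of $x$ has size $[K(x):K]$, a power of $\ell$; the orbits of size $1$ are exactly the elements of $K$, while all other orbits have size divisible by $\ell$. Partitioning the sum over $L$ into orbits, each orbit contributes its size times the common (algebraic-integer) value of the summand, so every orbit of size divisible by $\ell$ contributes a multiple of $\ell$ in the ring $\Z[\zeta_p]$ where these values live. Hence
\[
\Weil Ld(a) \equiv \sum_{x \in K} \psi_L(x^d + a x) \pmod{\ell}.
\]

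It then remains to identify this sum over $K$. By transitivity of the trace together with $\trace LK(y) = [L:K]\,y$ for $y \in K$, one gets $\psi_L(y) = \psi_K([L:K]\,y)$ for all $y \in K$, so the surviving sum equals $\sum_{x \in K} \psi_K([L:K](x^d + a x))$. I would then substitute $x = c y$, where $c$ is chosen so that the coefficient $[L:K]\,c^d$ of $y^d$ equals $1$. Writing $n = [L:K]$, note $n \in \Fp^\times$ because $\ell \neq p$, and $p-1 \mid \cardunits K$ forces $\gcd(d, p-1) = 1$; hence there is a unique $c \in \Fp^\times$ with $c^d = n^{-1}$, namely $c = n^{-1/d}$ with $1/d$ the inverse of $d$ modulo $p-1$ (this $c$ is in fact the unique $d$-th root in $\grmul K$, since $\gcd(d,\cardunits K)=1$). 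Then $n c^d = 1$ while the coefficient of $y$ becomes $n a c = [L:K]^{1-1/d} a$, so the sum is $\Weil Kd([L:K]^{1-1/d} a)$, as claimed.

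Most steps here are routine once the group action is set up; the point demanding care is the final substitution. One must verify that the normalizing $d$-th root is taken inside $\Fp^\times$—equivalently, that $1/d$ is computed modulo $p-1$ rather than modulo $\cardunits K$—so that the linear coefficient comes out exactly as $[L:K]^{1-1/d} a$. One should also be explicit that the congruence is an identity in the ring $\Z[\zeta_p]$ of algebraic integers, since the Weil sum values need not a priori be rational.
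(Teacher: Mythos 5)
Your proof is correct and follows essentially the same route as the paper's: decompose the sum over $L$ via the $\Gal(L/K)$-action (the fixed points being exactly $K$, all other orbits of size a positive power of $\ell$), convert $\psi_L$ restricted to $K$ into $\psi_K([L:K]\,\cdot)$ by trace transitivity, and reparameterize by the $d$-th root of $[L:K]^{-1}$ in $\Fp^\times$ to recognize $\Weil Kd([L:K]^{1-1/d}a)$. Your explicit justifications that $\gcd(d,p-1)=1$ and that the congruence lives in $\Z[\zeta_p]$ are points the paper leaves implicit, but the argument is the same.
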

\begin{proof}
For $a \in K$, we have
\[
\Weil Ld(a) = \sum_{x \in K} \psi_K(\trace LK(x^d+a x)) + \sum_{x \in L\smallsetminus K} \psi_L(x^d+a x).
\]
The first sum equals $\sum_{x \in K} \psi_K([L:K](x^d+a x))$, and if we reparameterize with $w=[L:K]^{1/d} x$, then we see that this sum is $\Weil Kd([L:K]^{1-1/d} a)$.
For the second sum, the action of $\Gal(L/K)$ partitions $L\smallsetminus K$ into orbits of Galois conjugates whose sizes are positive powers of $\ell$.
For any $\sigma \in \Gal(L/K)$, we have $\psi_L(x^d+a x)=\psi_L(\sigma(x^d+a x))=\psi_L(\sigma (x)^d + a \sigma(x))$, so that the value of $\psi_L(x^d+a x)$ is constant on orbits, and thus the sum over $L\smallsetminus K$ is $\ell$ times a sum of algebraic integers.
\end{proof}
We then explore what this tells us in the case where $d$ is degenerate in the smaller field.
\begin{corollary}\label{David}
Let $K$ be a finite field of characteristic $p$, and let $L$ be an extension of $L$ with $[L:K]$ a power of a prime $\ell$ distinct from $p$.
Let $d$ be degenerate over $K$.
Then $\Weil Ld(-1) \equiv \card{K} \pmod{\ell}$ and $\Weil Ld(a)\equiv 0 \pmod{\ell}$ for every $a \in K\smallsetminus\{-1\}$.
\end{corollary}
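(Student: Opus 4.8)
The plan is to apply Lemma \ref{Chiara} and observe that the degeneracy hypothesis trivializes the scaling factor appearing there. Writing $q = \card{K}$, Lemma \ref{Chiara} tells us that for every $a \in K$ we have $\Weil Ld(a) \equiv \Weil Kd([L:K]^{1-1/d} a) \pmod{\ell}$, where $1/d$ is the multiplicative inverse of $d$ modulo $p-1$. Since $d$ is degenerate over $K$, the Weil sum on the right is governed entirely by the degeneracy formula \eqref{Augustine}, so everything reduces to identifying the element $[L:K]^{1-1/d}$ of $K$.

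The key step is to show that this scaling factor equals $1$. First I would record that degeneracy forces $d \equiv 1 \pmod{p-1}$: by definition $d \equiv p^j \pmod{q-1}$ for some $j$, and since $p-1$ divides $q-1$, reducing modulo $p-1$ together with $p \equiv 1 \pmod{p-1}$ yields $d \equiv p^j \equiv 1 \pmod{p-1}$. Consequently $1/d \equiv 1 \pmod{p-1}$ as well, so the exponent $1 - 1/d$ is $\equiv 0 \pmod{p-1}$. Because $[L:K]$ is a power of $\ell$ and $\ell \neq p$, its reduction is a nonzero element of the prime field $\Fp \subseteq K$, and exponents of elements of $\grmul{\Fp}$ matter only modulo $p-1$; hence $[L:K]^{1-1/d} = [L:K]^{0} = 1$ in $K$.

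With the scaling factor equal to $1$, Lemma \ref{Chiara} simplifies to $\Weil Ld(a) \equiv \Weil Kd(a) \pmod{\ell}$ for every $a \in K$. The conclusion is then immediate from the degeneracy formula \eqref{Augustine}: taking $a = -1$ gives $\Weil Ld(-1) \equiv \Weil Kd(-1) = \card{K} \pmod{\ell}$, while for each $a \in K \smallsetminus \{-1\}$ we obtain $\Weil Ld(a) \equiv \Weil Kd(a) = 0 \pmod{\ell}$, as desired.

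I do not anticipate a genuine obstacle here; the entire content is the elementary observation that degeneracy over $K$ forces $d \equiv 1 \pmod{p-1}$, which collapses the factor $[L:K]^{1-1/d}$ to the identity and lets Lemma \ref{Chiara} pass through verbatim. The only point requiring slight care is to read $1/d$ modulo $p-1$ exactly as in Lemma \ref{Chiara}, so that the exponent computation $1 - 1/d \equiv 0 \pmod{p-1}$ is legitimate and the reduction of $[L:K]$ is genuinely a unit in $\Fp$.
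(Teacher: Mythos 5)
Your proposal is correct and follows essentially the same route as the paper: combine Lemma \ref{Chiara} with the degeneracy formula \eqref{Augustine}, observing that degeneracy over $K$ forces $d \equiv 1 \pmod{p-1}$ and hence collapses the scaling factor $[L:K]^{1-1/d}$ to $1$. The only difference is that you spell out the elementary verification of $d \equiv 1 \pmod{p-1}$ and of the exponent computation, which the paper leaves implicit.
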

\begin{proof}
Combine Lemma \ref{Chiara} with \eqref{Augustine}, and note that since $d$ is degenerate over $K$, we have $d \equiv 1 \pmod{p-1}$, so the factor of $[K:L]^{1-1/d}$ mentioned in Lemma \ref{Chiara} is equal to $1$. 
\end{proof}

\section{Gauss Sum and Valuation}\label{Thomas}

In this section, we explore the Fourier transform of the value set of the Weil sum, which is expressible in terms of Gauss sums.
This will enable us to prove some criteria about the $p$-divisibility of Weil sum values.

Throughout this section $K$ is a finite field of characteristic $p$ and order $q$ and, as always, we assume that $\gcd(d,q-1)=1$.
For any multiplicative character $\chi\in\carmul K$, 
we consider the Gauss sum
\[
\Gauss K(\chi) = \sum_{a \in \grmul K} \chi(a) \psi_K(a).
\]
By Fourier inversion,
if $a\in\grmul K$, we find that
\[
 \psi_K( a ) = \frac 1{q-1} \sum_{\chi\in\carmul K} \Gauss K(\chi)\bar\chi(a).
\]
Thus for $a\in\grmul K$,
\begin{align}
 \Weil Kd(a)  &= 1 + \frac 1{(q-1)^2}\sum_{b\in\grmul K}\sum_{\chi,\phi\in\carmul K} \Gauss K(\chi)\Gauss K(\phi)  \bar\chi^d(b) \bar\phi(a b) \label{decomposition} \\
      &= 1 + \frac 1{q-1}\sums{\chi,\phi \in \carmul K \\ \phi=\bar\chi^d} \Gauss K(\chi) \Gauss K (\phi) \bar\phi(a) \nonumber \\
      &= \frac q{q-1} + \frac 1{q-1} \sum_{\chi\not=1} \Gauss K (\chi) \Gauss K(\bar\chi^{d}) \chi^d(a) \nonumber.
\end{align}
If we denote by $t$ the inverse of $-d$ modulo $q-1$, the above formula 
shows that $q$ and the $\Gauss K (\chi) \Gauss K(\bar\chi^{d})$ are the Fourier coefficients of the mapping $a\mapsto \Weil Kd(a^t)$ from $\grmul K$ to $\C$, whence by Fourier inversion
\begin{equation}\label{inversion}
\sum_{a\in\grmul K} \Weil K d(a^t) \chi(a) = \begin{cases}
$q$ & \text{if $\chi=1$,}\\
\Gauss K (\chi) \Gauss K(\bar\chi^{d}) & \text{otherwise.}
\end{cases}
\end{equation}

Recall from the Introduction that for any nonzero integer $n$, 
the {\it $p$-adic valuation} of $n$,
written $\valuation(n)$, is the largest $k$ 
such that $p^k$ divides $n$, 
and we set $\valuation(0)=\infty$.
Then $\valuation(a b)=\valuation(a)+\valuation(b)$ and $\valuation(a+b) \geq \min\{\valuation(a),\valuation(b)\}$, which becomes an equality whenever $\valuation(a)\not=\valuation(b)$.
We can extend the definition to $\Q$, wherein $\valuation(a/b)=\valuation(a)-\valuation(b)$.
If $\zeta_p$ and $\zeta_{q-1}$ are, respectively, primitive $p$th and $(q-1)$th roots of unity over $\Q$,  we can further extend $\valuation$ to the field $\Q(\zeta_p,\zeta_{q-1})$ where the Gauss sums reside, while still retaining the relations given above concerning products and sums of elements.
In this last field, elements can have fractional valuations: for instance $\valuation(1-\zeta_p)=1/(p-1)$.

We introduce the useful notation

\[
\Val Kd = \min_{a \in \grmul K} \valuation( \Weil Kd(a) ).
\]
It is well known \cite{Langevin}, \cite[Section 6]{Langevin-Veron} that Stickelberger's congruence on Gauss sums can be used
to obtain the value of $\Val Kd$ but we do not need it to reach our goal.
\begin{lemma}
For $K$ a finite field of order $q$, and $d$ an integer coprime to $q-1$, we have
$$\Val Kd  = \mins{\chi \in \carmul K \\ \chi\not=1} \valuation( \Gauss K(\chi) \Gauss K(\bar\chi^d) ).$$
\end{lemma}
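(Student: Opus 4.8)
The plan is to prove the two inequalities $\Val Kd \le G$ and $\Val Kd \ge G$ separately, where I abbreviate $G = \mins{\chi \in \carmul K \\ \chi\neq1}\valuation(\Gauss K(\chi)\Gauss K(\bar\chi^d))$. The easy inequality $\Val Kd \le G$ comes directly from the Fourier inversion formula \eqref{inversion}: for each nontrivial $\chi$ the quantity $\Gauss K(\chi)\Gauss K(\bar\chi^d)$ equals $\sum_{a \in \grmul K}\Weil Kd(a^t)\chi(a)$, a sum of Weil-sum values times roots of unity, so its valuation is at least $\min_{a}\valuation(\Weil Kd(a^t))$; since $a \mapsto a^t$ permutes $\grmul K$, this minimum is exactly $\Val Kd$. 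Taking the minimum over $\chi\neq1$ gives $G \ge \Val Kd$.

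For the reverse inequality I would use the third line of \eqref{decomposition}, rewritten (after clearing the denominator $q-1$) as $(q-1)\Weil Kd(a) = q + \sum_{\chi\neq1}\Gauss K(\chi)\Gauss K(\bar\chi^d)\chi^d(a)$. Since $\valuation(q-1)=0$ and each $\chi^d(a)$ is a root of unity, the valuation of the right-hand side is at least $\min(e,G)$, where $q=p^e$, whence $\valuation(\Weil Kd(a)) \ge \min(e,G)$ for every $a$ and so $\Val Kd \ge \min(e,G)$. This yields the desired $\Val Kd \ge G$ \emph{provided} I can show $G \le e$.

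The crux is thus the bound $G \le e$, which I would obtain by an averaging argument over characters, deliberately avoiding Stickelberger's theorem. The key input is the standard identity $\Gauss K(\chi)\Gauss K(\bar\chi) = \chi(-1)q$ for $\chi\neq1$, giving $\valuation(\Gauss K(\chi)) + \valuation(\Gauss K(\bar\chi)) = e$. Summing over the $q-2$ nontrivial characters, and using that $\chi\mapsto\bar\chi$ is a bijection of them, shows $\sum_{\chi\neq1}\valuation(\Gauss K(\chi)) = (q-2)e/2$; and since $\gcd(d,q-1)=1$ makes $\chi\mapsto\bar\chi^d$ a bijection of the nontrivial characters, the same total equals $\sum_{\chi\neq1}\valuation(\Gauss K(\bar\chi^d))$. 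Adding these, the average of $\valuation(\Gauss K(\chi)\Gauss K(\bar\chi^d))$ over nontrivial $\chi$ is exactly $e$, so its minimum $G$ is at most $e$. Combining the three steps gives $\Val Kd = G$.

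I expect the averaging step that establishes $G \le e$ to be the only real obstacle; the two Fourier inequalities are routine once \eqref{decomposition} and \eqref{inversion} are in hand. The point to watch is that the isolated term $q$ (of valuation $e$) in the inverse formula could a priori drag the lower bound down to $\min(e,G)$ rather than $G$, which is precisely why the bound $G \le e$ is needed and not merely convenient.
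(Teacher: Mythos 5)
Your proposal is correct and takes essentially the same approach as the paper: the two inequalities come from \eqref{decomposition} and \eqref{inversion} exactly as you argue, and the key bound $\min_{\chi\not=1}\valuation(\Gauss K(\chi)\Gauss K(\bar\chi^d))\leq\valuation(q)$ plays the same pivotal role there. The paper obtains that bound by taking the valuation of $\prod_{\chi\not=1}\Gauss K(\chi)\Gauss K(\bar\chi^d)=\pm q^{q-2}$, which is just the multiplicative form of your averaging argument, resting on the same facts $\Gauss K(\chi)\Gauss K(\bar\chi)=\chi(-1)q$ and the bijectivity of $\chi\mapsto\bar\chi^d$ on nontrivial characters.
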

\begin{proof}
This is straightforward once we note that $\valuation(\chi(a))=0$ 
for any $\chi \in \carmul K$ and any $a \in \grmul K$, 
because $(q-1)\valuation(\chi(a))=\valuation(\chi(a)^{q-1})=\valuation(1)=0$.
Using the relation (\ref{decomposition}), one 
has $\Val Kd \ge \min_{\chi\not=1} \valuation( \Gauss K(\chi) \Gauss K(\bar\chi^d) )$, and the reverse inequality is obtained
by using the relation (\ref{inversion}), once we establish that $\min_{\chi\not=1} \valuation(\Gauss K(\chi) \Gauss K(\bar\chi^d) ) \leq \valuation(q)$.
This last fact follows because $\Gauss K(\bar\chi)=\chi(-1) \overline{\Gauss K(\chi)}$ and $|\Gauss K(\chi)|^2=q$ for any nontrivial multiplicative character $\chi$, and so $\prod_{\chi\not=1} \Gauss K(\chi) \Gauss K(\bar\chi^d) = \pm q^{q-2}$.
\end{proof}
\begin{corollary}
\label{hasse}
Let $L$ be a finite extension of $K$.
For a positive integer $d$,
\[
  \Val Ld \leq [L:K] \times \Val Kd 
\]
\end{corollary}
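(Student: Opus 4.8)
The plan is to convert the statement about valuations of Weil sums into one about valuations of Gauss sums, and then to invoke the Davenport--Hasse relation to compare Gauss sums over $L$ with those over $K$. Throughout, the standing hypothesis that $d$ is coprime to $\cardunits{L}$ is in force (and this automatically gives coprimality to $\cardunits{K}$, since $\cardunits{K}$ divides $\cardunits{L}$), so both $\Val Ld$ and $\Val Kd$ are defined.

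First I would apply the preceding lemma, which identifies $\Val Kd$ with $\mins{\chi \in \carmul K \\ \chi\not=1}\valuation(\Gauss K(\chi)\Gauss K(\bar\chi^d))$, and likewise identifies $\Val Ld$ with the analogous minimum over nontrivial characters of $\grmul L$. Let $\chi_0 \in \carmul K$ be a nontrivial character attaining the minimum for $K$, so that $\Val Kd = \valuation(\Gauss K(\chi_0)\Gauss K(\bar\chi_0^d))$. It then suffices to exhibit a single nontrivial character of $\grmul L$ whose Gauss-sum valuation is exactly $[L:K]$ times that of $\chi_0$, since $\Val Ld$ is a minimum taken over all such characters.

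The key step is to set $\eta_0 = \chi_0 \circ \norm LK$, the lift of $\chi_0$ along the norm map. Because $\norm LK$ is surjective onto $\grmul K$, the hypothesis $\chi_0 \neq 1$ guarantees $\eta_0 \neq 1$, so $\eta_0$ is admissible in the minimum defining $\Val Ld$. Since composition with the norm commutes with taking powers and complex conjugates, one checks that $\bar\eta_0^d = \bar\chi_0^d \circ \norm LK$ is the lift of $\bar\chi_0^d$. The Davenport--Hasse relation then gives, up to a sign that is irrelevant to the valuation, $\Gauss L(\chi \circ \norm LK) = \pm\,\Gauss K(\chi)^{[L:K]}$, whence $\valuation(\Gauss L(\eta_0)) = [L:K]\,\valuation(\Gauss K(\chi_0))$ and $\valuation(\Gauss L(\bar\eta_0^d)) = [L:K]\,\valuation(\Gauss K(\bar\chi_0^d))$.

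Combining these yields $\valuation(\Gauss L(\eta_0)\Gauss L(\bar\eta_0^d)) = [L:K]\,\valuation(\Gauss K(\chi_0)\Gauss K(\bar\chi_0^d)) = [L:K]\,\Val Kd$, and since $\eta_0$ is one of the nontrivial characters over which $\Val Ld$ is minimized, we obtain $\Val Ld \leq [L:K]\,\Val Kd$, as claimed. I expect the only delicate point to be the bookkeeping around the Davenport--Hasse relation: confirming that the norm-lift of $\chi_0$ is nontrivial and that $\bar\eta_0^d$ really is the lift of $\bar\chi_0^d$, so that the factor $[L:K]$ attaches to both Gauss-sum factors. Once this is settled the inequality follows immediately, and the argument makes transparent why only an inequality (and not equality) should be expected: the minimum over $\grmul L$ ranges over characters not of the form $\chi\circ\norm LK$, and these may well achieve a smaller valuation.
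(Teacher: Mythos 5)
Your proof is correct and follows essentially the same route as the paper: both reduce to the Gauss-sum characterization of $\Val Kd$ from the preceding lemma, lift a nontrivial character of $\grmul K$ along the norm map, and apply the Davenport--Hasse relation, noting that such lifts remain nontrivial. The paper's version is merely terser, leaving implicit the bookkeeping (choice of minimizing character, nontriviality via surjectivity of the norm, compatibility of lifting with powers and conjugation) that you spell out.
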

\begin{proof}
Denoting by $\norm LK$
the norm from $L$ over $K$, the Davenport-Hasse relation (see \cite{Davenport-Hasse}) tells us that if $\chi \in \carmul K$, we have
\[
- \Gauss L( \chi\circ\norm LK ) = (-\Gauss K(\chi) )^{[L:K]},
\]
and the set of lifted characters $\chi\circ\norm LK$ as $\chi$ runs through the nontrivial elements of $\carmul K$ is a subset of the nontrivial elements of $\carmul L$.
\end{proof}
The remaining results in this section are specific to quadratic extensions of finite fields, which are involved in our three main results (Theorems \ref{Matilda}, \ref{Nancy}, and \ref{Priscilla}).
\begin{lemma}\label{Frederick}
Let $K$ be a finite field, and let $L$ be the quadratic extension of $K$.
Let $d$ be degenerate over $K$, but not over $L$.
Let $Y$ be a set of representatives of cosets of $\grmul K$ in $\grmul L$.
Then for $a \in L$, we have
\[
\Weil Ld(a) = \card{K} (Z(a)-1),
\]
where $Z(a)$ is the number of $y \in Y$ such that $\trace LK(y^d+ay)=0$. 
\end{lemma}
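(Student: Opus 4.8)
The plan is to partition $\grmul L$ into cosets of $\grmul K$ and collapse each resulting inner sum to an elementary additive character sum over $K$. Write $q=\card K$, so that $\card Y = \card{\grmul L}/\card{\grmul K} = (q^2-1)/(q-1) = q+1$. Since $\Weil Ld$ is unchanged when $d$ is replaced by any power-of-$p$ multiple (Lemma \ref{Geoffrey}) and degeneracy of $d$ over $K$ means $d\equiv p^j\pmod{q-1}$, I would first normalize the representative so that $d\equiv 1\pmod{q-1}$; this makes $c^d=c$ for every $c\in\grmul K$, which is the only place the degeneracy hypothesis enters. The hypothesis that $d$ is nondegenerate over $L$ is not actually needed for the identity itself; it merely guarantees that $\Weil Ld$ is of interest.

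Next, separating the $x=0$ term, which contributes $\psi_L(0)=1$, and using that every $x\in\grmul L$ is uniquely of the form $yc$ with $y\in Y$ and $c\in\grmul K$, I would write
\[
\Weil Ld(a) = 1 + \sum_{y\in Y}\sum_{c\in\grmul K}\psi_L\bigl((yc)^d + a(yc)\bigr).
\]
For the inner sum $S(y)$ I would use $\psi_L=\psi_K\circ\trace LK$ together with $c\in K$ and $c^d=c$ to pull $c$ out of the $K$-linear trace:
\[
\psi_L\bigl((yc)^d+ayc\bigr)=\psi_K\bigl(\trace LK(c\,y^d+c\,ay)\bigr)=\psi_K\bigl(c\,T(y)\bigr),\qquad T(y):=\trace LK(y^d+ay)\in K.
\]
Hence $S(y)=\sum_{c\in\grmul K}\psi_K(c\,T(y))$, the standard character sum that equals $q-1$ when $T(y)=0$ and $-1$ otherwise, since $\sum_{c\in K}\psi_K(cT(y))$ is $q$ or $0$ and we remove the $c=0$ term.

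Finally I would assemble the count. With $Z(a)=\card{\{y\in Y: T(y)=0\}}$ and $\card Y=q+1$,
\[
\Weil Ld(a)=1+(q-1)Z(a)-\bigl((q+1)-Z(a)\bigr)=q\bigl(Z(a)-1\bigr),
\]
as claimed. The main thing to get right is the inner-sum collapse: one must check that the degeneracy hypothesis genuinely linearizes the exponent, so that $S(y)$ becomes an honest linear character sum. This is exactly where the normalization matters, and I expect it to be the main obstacle. For a representative with $d\not\equiv 1\pmod{q-1}$ one has only $c^d=c^{p^j}$, and $\Gal(K/\Fp)$-invariance of the trace then replaces $T(y)$ by $\trace LK(y^d)^{p^{-j}}+\trace LK(ay)$, introducing a Frobenius twist; the first step removes this twist via the $p$-power invariance of $\Weil Ld$ and identifies the exponent appearing in $Z(a)$ with $d$ itself.
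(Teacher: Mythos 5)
Your proof is correct and takes essentially the same approach as the paper's: normalize $d \equiv 1 \pmod{\cardunits{K}}$ via Lemma \ref{Geoffrey}, decompose $\grmul L$ into cosets of $\grmul K$, and collapse each inner sum into an additive character sum over $K$. The only differences are cosmetic: you evaluate the inner sum over $\grmul K$ (getting $\card{K}-1$ or $-1$) where the paper completes it to a sum over all of $K$ (getting $\card{K}$ or $0$), and your closing remark about the Frobenius twist makes explicit the normalization issue that the paper's ``without loss of generality'' handles implicitly.
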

\begin{proof}
If $K$ has characteristic $p$, then Lemma \ref{Geoffrey} allows us to replace $d$ with $p^j d$ for any $j$, so we may take $d \equiv 1 \pmod{\cardunits{K}}$ without loss of generality.  Then 
\begin{align*}
\Weil Ld(a) & = 1+ \sum_{y\in Y} \sum_{x \in \grmul K} \psi_L( (y^d + ay)x )\\
& = -\card{K} + \sum_{y \in Y} \sum_{x \in K} \psi_K(x \trace LK(y^d+a y)),
\end{align*}
since $\card{Y}=(\card{L}-1)/(\card{K}-1)=\card{K}+1$.  The sum over $x$ is $\card{K}$ when $\trace LK(y^d+a y)=0$; otherwise the sum is $0$.
\end{proof}
This calculation has immediate consequences for the $p$-adic valuation of Weil sum values.
\begin{corollary}\label{subfield}
Let $K$ be a finite field of characteristic $p$, and let $L$ be the quadratic extension of $K$.
Let $d$ be degenerate over $K$, but not over $L$.
Then
\[
\Val Ld = [K:\Fp],
\]
and furthermore, $\Weil Ld(a)=-|K|$ for some $a \in \grmul L$.
\end{corollary}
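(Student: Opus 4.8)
The plan is to exploit the explicit evaluation in Lemma \ref{Frederick}, which under exactly these hypotheses gives $\Weil Ld(a)=\card K\,(Z(a)-1)$ for every $a\in L$, where $Z(a)$ is the number of $y$ in a fixed transversal $Y$ with $\trace LK(y^d+ay)=0$. Writing $q=\card K$, so that $\valuation(q)=[K:\Fp]$, the whole corollary is controlled by the ordinary integer $Z(a)-1$. First I would record the easy half: since $Z(a)-1\in\Z$, every nonzero value satisfies $\valuation(\Weil Ld(a))=[K:\Fp]+\valuation(Z(a)-1)\ge[K:\Fp]$, while the values with $Z(a)=1$ vanish. Hence $\Val Ld\ge[K:\Fp]$, and both assertions will follow at once once I exhibit a single $a\in\grmul L$ with $Z(a)=0$, for then $\Weil Ld(a)=-q=-\card K$ realizes the valuation $[K:\Fp]$ exactly.

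To find such an $a$ I would combine the range of $Z$ with the first two power moments. Because $d$ is nondegenerate over $L$, Corollary \ref{Rufus} gives $\card{\Weil Ld(a)}<\card L=q^2$, so $q\,\card{Z(a)-1}<q^2$ forces $Z(a)\le q$; being a nonnegative count, $Z(a)\in\{0,1,\dots,q\}$, i.e. $v(a):=Z(a)-1$ lies in $\{-1,0,\dots,q-1\}$. Applying Lemma \ref{Prudence}\eqref{Edgar} and \eqref{Therese} over the field $L$ and dividing by $q$ and $q^2$ respectively yields
\[
\sum_{a\in\grmul L} v(a)=q,\qquad \sum_{a\in\grmul L} v(a)^2=q^2.
\]

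The crux is to show these moments are incompatible with $v(a)\ge 0$ holding for all $a$. Suppose for contradiction that no $a$ has $Z(a)=0$, so $0\le v(a)\le q-1$ throughout. Multiplying $v(a)\le q-1$ by the nonnegative quantity $v(a)$ gives $v(a)^2\le(q-1)\,v(a)$, and summing over $\grmul L$ produces $\sum_a v(a)^2\le(q-1)\sum_a v(a)=q(q-1)<q^2$, contradicting the second moment. Therefore some $a\in\grmul L$ has $v(a)<0$; since $v(a)\ge-1$ always, that $a$ satisfies $v(a)=-1$, i.e. $Z(a)=0$ and $\Weil Ld(a)=-\card K$. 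Together with the preliminary lower bound this gives $\Val Ld=[K:\Fp]$ as well as the furthermore. The one step that genuinely carries the argument is this second-moment contradiction, and it is worth noting that it needs the sharp bound $v(a)\le q-1$ from Corollary \ref{Rufus} rather than the weaker $v(a)\le q$ coming from $\card Y=q+1$; the factorization from Lemma \ref{Frederick} and the valuation bookkeeping are otherwise routine.
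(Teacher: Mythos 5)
Your proof is correct, and its first half --- the factorization $\Weil Ld(a)=\card{K}\,(Z(a)-1)$ from Lemma \ref{Frederick} and the resulting bound $\Val Ld\geq[K:\Fp]$ --- is exactly what the paper does. Where you diverge is in producing an $a\in\grmul L$ with $Z(a)=0$: the paper simply invokes Corollary \ref{George}, which says that a sum with $d$ nondegenerate must assume at least one negative value, and since a negative value of $\card{K}(Z(a)-1)$ forces $Z(a)=0$, that single citation finishes the proof. Your replacement --- the bound $Z(a)\leq\card{K}$ from Corollary \ref{Rufus}, the first two power moments of $\Weil Ld$ over $L$ from Lemma \ref{Prudence}\eqref{Edgar},\eqref{Therese}, and the inequality $\sum_a v(a)^2\leq(q-1)\sum_a v(a)<q^2$ --- is a correct, self-contained re-derivation of precisely that negativity statement in this special case; note that Corollary \ref{George} is itself proved in the paper by comparing the same two power moments, so your argument essentially inlines its proof, specialized via the explicit form of the values. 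Your route therefore costs more work for a fact the paper already has on the shelf, though it has the minor virtue of making the quantitative interaction explicit: as you observe, the naive bound $Z(a)\leq\card{Y}=q+1$ yields only $\sum_a v(a)^2\leq q^2$, and one would then have to rule out the equality case (a two-valued configuration, excluded by Theorem \ref{Aaron}) in an extra step, whereas the sharpening from Corollary \ref{Rufus} gives the strict inequality at once. Net assessment: same skeleton, with the paper's one-line appeal to Corollary \ref{George} replaced by a slightly longer but valid moment computation.
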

\begin{proof}
Let $Y$ and $Z(a)$ be as defined in Lemma \ref{Frederick}, which tells us that 
\[
\Weil Ld(a) = \card{K}(Z(a)-1),
\]
for each $a \in L$.
All these numbers have a valuation greater or equal to $[K:\Fp]$.
Since $d$ is not degenerate over $L$, $\Weil Ld(a)$ must be negative for some $a \in \grmul L$ by Corollary \ref{George}.  
The only way to make $\Weil Ld(a)$ negative is to have $Z(a)=0$, which makes $\Weil Ld(a)=-\card{K}$, and then the valuation of $\Weil Ld(a)$ is precisely $[K:\Fp]$.
\end{proof}
The calculation of Lemma \ref{Frederick} also gives a nonnegativity condition that will be useful in our proof of Theorem \ref{Priscilla}.
\begin{corollary}\label{William}
Let $K$ be a finite field, and let $L$ be the quadratic extension of $K$.
Let $d$ be degenerate over $K$.
Then $\Weil Ld(a) \geq 0$ for all $a\in K$.
\end{corollary}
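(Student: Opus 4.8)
The plan is to reduce the inequality to a counting statement via the formula of Lemma \ref{Frederick}, and then to exhibit a single element that makes the count positive. Note first that Corollary \ref{William} drops the hypothesis of Lemma \ref{Frederick} that $d$ be nondegenerate over $L$, so I would begin by splitting on this. If $d$ happens to be degenerate over $L$ as well, then eq.\thinspace\eqref{Augustine} applies directly: $\Weil Ld(a)$ equals $\card L$ when $a=-1$ and $0$ otherwise, both nonnegative, so there is nothing to prove. Thus I may assume $d$ is degenerate over $K$ but not over $L$, which is exactly the setting of Lemma \ref{Frederick}.

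In that setting Lemma \ref{Frederick} gives $\Weil Ld(a)=\card K(Z(a)-1)$, where $Z(a)$ counts the coset representatives $y \in Y$ with $\trace LK(y^d+ay)=0$. Since $\card K>0$, the desired inequality $\Weil Ld(a)\ge 0$ is equivalent to $Z(a)\ge 1$, that is, to the existence of at least one $y\in\grmul L$ with $\trace LK(y^d+ay)=0$ (the vanishing of this trace being constant on cosets of $\grmul K$, as the computation in Lemma \ref{Frederick} shows). So the whole problem reduces to producing one such $y$.

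The key step, and the only place any real work happens, is to produce this $y$ from a trace-zero element, echoing the argument in Lemma \ref{Vladislav}. The map $\trace LK\colon L\to K$ is a nonzero $K$-linear functional on the two-dimensional space $L$, so its kernel has $\card K\ge 2$ elements and therefore contains a nonzero $y$. For $a\in K$ I can then split $\trace LK(y^d+ay)=\trace LK(y^d)+a\,\trace LK(y)$; the second summand vanishes because $\trace LK(y)=0$. For the first summand, $\trace LK(y)=y+y^q=0$ forces $y^q=-y$, and since $\gcd(d,\cardunits L)=1$ makes $d$ odd when $p$ is odd (the point being automatic when $p=2$), I get $\trace LK(y^d)=y^d+y^{qd}=y^d+(-y)^d=0$. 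Hence this nonzero $y$ satisfies $\trace LK(y^d+ay)=0$, so $Z(a)\ge 1$ and $\Weil Ld(a)=\card K(Z(a)-1)\ge 0$, as required. I expect the main obstacle to be this trace-zero computation, specifically verifying that passing from $y$ to $y^d$ preserves the vanishing of the trace, but it is short and parallels the earlier lemmas.
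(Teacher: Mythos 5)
Your proposal is correct and takes essentially the same approach as the paper: reduce via Lemma \ref{Frederick} to exhibiting some $y \in \grmul L$ with $\trace LK(y^d+ay)=0$, then produce a trace-zero witness for which both $\trace LK(y)$ and $\trace LK(y^d)$ vanish. The only cosmetic difference is that the paper writes down explicit witnesses split by characteristic (any $y \in \grmul K$ when $p=2$; a $y$ with $y^2 \in K$ but $y \notin K$ when $p$ is odd), whereas you take an arbitrary nonzero element of the kernel of $\trace LK$ and reuse the computation from Lemma \ref{Vladislav}, which gives a uniform treatment of both characteristics.
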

\begin{proof}
We may take $d$ nondegenerate over $L$, since \eqref{Augustine} settles the degenerate case.
Let $a \in K$.
By Lemma \ref{Frederick}, it suffices to find some $y \in \grmul L$ such that $\trace LK(y^d+a y)=1$.
In characteristic $2$, take $y \in \grmul K$, so that $\trace LK(y^d+a y)=2(y^d+a y)=0$.
In odd characteristic, take $y \in L$ with $y^2\in K$ but $y\not\in K$.
Then $y$ and $-y$ are conjugates under the action of $\Gal(L/K)$, and so $\trace LK(y^d+a y)=(-y)^d+a(-y)+y^d+a y=0$.
\end{proof}

\section{Action of Galois Groups of Cyclotomic Fields}\label{Penelope}

Throughout this section, $\zeta_p$ denotes a primitive $p$th root of unity over $\Q$.
If $K$ is a field of characteristic $p$, then the Weil sum values $\Weil Kd(a)$ reside in $\Q(\zeta_p)$ by definition \eqref{James}.
First we see how Galois automorphisms permute the Weil sum values.
Recall that we always have $d$ invertible modulo $\cardunits{K}$ whenever we write the sum $\Weil Kd$.
\begin{lemma}\label{Nathan}
Let $K$ be a finite field of characteristic $p$.
If $\sigma$ is the element of $\Gal(\Q(\zeta_p)/\Q)$ with $\sigma(\zeta_p)=\zeta_p^j$, then $\sigma(\Weil Kd(a))=\Weil Kd(j^{1-(1/d)} a)$, where $1/d$ indicates the multiplicative inverse of $d$ modulo $p-1$.
\end{lemma}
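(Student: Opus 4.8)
The plan is to push $\sigma$ through the sum and then absorb the resulting scalar by a change of summation variable, exactly as in the reparameterization already used for Lemma~\ref{Chiara}. Writing the canonical character as $\psi_K(y)=\zeta_p^{\trace K\Fp(y)}$, definition~\eqref{James} gives $\Weil Kd(a)=\sum_{x\in K}\zeta_p^{\trace K\Fp(x^d+ax)}$. Since $\sigma(\zeta_p)=\zeta_p^j$ and $\sigma$ fixes $\Q$, applying $\sigma$ term by term and using that $\trace K\Fp$ is $\Fp$-linear yields
\[
\sigma(\Weil Kd(a))=\sum_{x\in K}\zeta_p^{j\,\trace K\Fp(x^d+ax)}=\sum_{x\in K}\psi_K(j x^d+j a x),
\]
where $j$ is read as an element of $\grmul{\Fp}$; it is a unit modulo $p$ because $\sigma$ is an automorphism, so $\zeta_p^j$ is again a primitive $p$th root of unity.

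Next I would clear the leading coefficient $j$. Because $\gcd(d,\cardunits K)=1$, the map $c\mapsto c^d$ is a bijection of $\grmul K$, so there is a unique $c\in\grmul K$ with $c^d=j^{-1}$. Substituting $x=cw$ gives
\[
\sum_{x\in K}\psi_K(j x^d+j a x)=\sum_{w\in K}\psi_K(j c^d w^d+j a c\,w)=\sum_{w\in K}\psi_K(w^d+(jc)\,a\,w)=\Weil Kd((jc)a).
\]
Everything then reduces to identifying the scalar $jc$ with $j^{1-1/d}$.

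The one point requiring care is the modulus with respect to which the exponent $1/d$ is taken. From $c^d=j^{-1}$ and $j^{\cardunits K}=1$ one checks that $c=j^{-\delta}$, where $\delta$ is the inverse of $d$ modulo $\cardunits K$. But $j$ lies in the prime field, so its order divides $p-1$; since $p-1$ divides $\cardunits K$, reducing $\delta$ modulo $p-1$ gives $c=j^{-1/d}$ with $1/d$ now the inverse of $d$ modulo $p-1$, matching the statement. Hence $jc=j^{1-1/d}$ and $\sigma(\Weil Kd(a))=\Weil Kd(j^{1-1/d}a)$. I expect this descent from the modulus $\cardunits K$ to $p-1$---valid precisely because $j$ is a prime-field element, whose powers depend only on the exponent modulo $p-1$---to be the only subtle step; the rest is a routine unwinding of the definitions.
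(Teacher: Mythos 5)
Your proof is correct. The paper offers no argument of its own here---it simply cites \cite[Theorem 2.1(b)]{Katz}---but your argument (push $\sigma$ through the sum, absorb the coefficient $j$ by the substitution $x = cw$ with $c^d = j^{-1}$, then descend from the modulus $\cardunits{K}$ to $p-1$ using that $j$ lies in the prime field, so that $jc = j^{1-1/d}$) is precisely the standard proof of the cited result, and it uses the same reparameterization device that the paper itself employs in Lemma~\ref{Chiara}.
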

\begin{proof}
This is \cite[Theorem 2.1(b)]{Katz}.
\end{proof}
This shows that if two Weil sum values are Galois conjugates over $\Q$, then they occur equally often.
\begin{corollary}\label{Genevieve}
Let $K$ be a finite field, and let $A$ and $B$ be values assumed by $\Weil Kd$.
If $A$ and $B$ are Galois conjugates over $\Q$, then the number of $a \in \grmul K$ such that $\Weil Kd(a)=A$ is equal to the number of $a \in \grmul K$ such that $\Weil Kd(a)=B$.
\end{corollary}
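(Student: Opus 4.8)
The plan is to realize the passage from $A$ to a conjugate $B$ as an honest automorphism of the cyclotomic field in which all the Weil sum values live, and then to translate that automorphism, via Lemma \ref{Nathan}, into a bijection of $\grmul K$ that matches up the relevant fibers. Recall from the start of this section that every value $\Weil Kd(a)$ lies in $\Q(\zeta_p)$, and that $\Q(\zeta_p)/\Q$ is a Galois extension. Hence the minimal polynomial of $A$ over $\Q$ splits completely in $\Q(\zeta_p)$, and $\Gal(\Q(\zeta_p)/\Q)$ acts transitively on its roots; since $B$ is by hypothesis one of these roots, there is some $\sigma \in \Gal(\Q(\zeta_p)/\Q)$ with $\sigma(A)=B$.

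Next I would pin down $\sigma$ concretely. Write $\sigma(\zeta_p)=\zeta_p^j$ for the appropriate integer $j$ coprime to $p$, and set $c=j^{1-(1/d)}$, where $1/d$ is the inverse of $d$ modulo $p-1$ and the exponent is read modulo $p-1$. Then $c$ is a nonzero element of the prime subfield $\Fp\subseteq K$, so $a\mapsto ca$ is a bijection of $\grmul K$. Lemma \ref{Nathan} gives $\sigma(\Weil Kd(a))=\Weil Kd(ca)$ for every $a$, and since $\sigma$ is injective we obtain the equivalence
\[
\Weil Kd(a)=A \iff \Weil Kd(ca)=\sigma(A)=B.
\]
Thus $a\mapsto ca$ carries $\{a\in\grmul K:\Weil Kd(a)=A\}$ bijectively onto $\{a\in\grmul K:\Weil Kd(a)=B\}$, and the two fibers have the same cardinality, as claimed.

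The routine verifications---that $c$ indeed lands in $\grmul K$ and that multiplication by it permutes $\grmul K$---are immediate. The only genuine point to get right is the first step: one must invoke the normality of $\Q(\zeta_p)/\Q$ to upgrade ``Galois conjugate over $\Q$'' to ``image under a single element of $\Gal(\Q(\zeta_p)/\Q)$.'' I expect this to be the main (though still mild) obstacle, since without it Lemma \ref{Nathan}, which only describes automorphisms of the cyclotomic field, would not directly apply.
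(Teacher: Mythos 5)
Your proposal is correct and follows essentially the same route as the paper: choose $\sigma \in \Gal(\Q(\zeta_p)/\Q)$ with $\sigma(A)=B$ (the paper leaves implicit the normality argument you spell out), then apply Lemma \ref{Nathan} to see that $a \mapsto j^{1-1/d}a$ carries the fiber over $A$ bijectively onto the fiber over $B$. Your extra care in justifying the existence of $\sigma$ and in checking the bijection is sound but does not change the substance of the argument.
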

\begin{proof}
Let $\sigma \in \Gal(\Q(\zeta_p)/\Q)$ with $\sigma(A)=B$, and let $j \in \F_p^\times$ such that $\sigma(\zeta_p)=\zeta_p^j$.
By Lemma \ref{Nathan}, $\Weil Kd(a)=A$ precisely when $\Weil Kd(j^{1-1/d} a)=B$.
\end{proof}
Often the Weil sums lie in a proper subfield of $\Q(\zeta_p)$.
We give a criterion for determining when this happens.
\begin{lemma}\label{Vivian}
Let $K$ be a finite field of characteristic $p$.  Let $E$ be the extension of $\Q$ generated by all the values of $\Weil Kd(a)$ for $a \in \grmul K$.  
Let $m$ be the smallest divisor of $p-1$ such that $d \equiv 1 \pmod{(p-1)/m}$.
Then $E$ is the unique subfield of $\Q(\zeta_p)$ with $[E:\Q]=m$.
\end{lemma}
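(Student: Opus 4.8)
The plan is to combine the Galois correspondence for the cyclotomic extension $\Q(\zeta_p)/\Q$ with the explicit action recorded in Lemma \ref{Nathan}. The group $G=\Gal(\Q(\zeta_p)/\Q)$ is cyclic of order $p-1$, isomorphic to $\grmul{\Fp}$ via $\sigma_j\mapsto j$ where $\sigma_j(\zeta_p)=\zeta_p^j$, so it has exactly one subgroup of each order dividing $p-1$; hence $\Q(\zeta_p)$ has a unique subfield of each degree dividing $p-1$. Since every value $\Weil Kd(a)$ lies in $\Q(\zeta_p)$ by \eqref{James}, the field $E$ is a subfield of $\Q(\zeta_p)$, and to identify it it suffices to prove $[E:\Q]=m$.

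Let $H=\Gal(\Q(\zeta_p)/E)$, so that $[E:\Q]=[G:H]$. Because $E$ is generated over $\Q$ by the Weil values, $H$ consists precisely of those $\sigma_j$ fixing every $\Weil Kd(a)$. By Lemma \ref{Nathan} we have $\sigma_j(\Weil Kd(a))=\Weil Kd(j^{1-1/d}a)$, so, writing $c=j^{1-1/d}\in\grmul{\Fp}$, the condition $\sigma_j\in H$ is equivalent to the invariance $\Weil Kd(ca)=\Weil Kd(a)$ for all $a\in\grmul K$. The heart of the argument is the claim that this invariance holds only when $c=1$.

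I would prove the claim using the Gauss-sum expansion \eqref{decomposition}, which exhibits $\tfrac{1}{q-1}\Gauss K(\chi)\Gauss K(\bar\chi^d)$ as the coefficient of $\chi^d(a)$ in $\Weil Kd(a)$. Replacing $a$ by $ca$ multiplies this coefficient by $\chi^d(c)$; since the characters $\chi^d$, as $\chi$ runs over $\carmul K\smallsetminus\{1\}$, are distinct and hence linearly independent, the invariance forces $\Gauss K(\chi)\Gauss K(\bar\chi^d)(\chi^d(c)-1)=0$ for each $\chi\neq 1$. The Gauss factors never vanish (indeed $|\Gauss K(\chi)|^2=q$, and $\gcd(d,\cardunits K)=1$ guarantees $\bar\chi^d\neq 1$ whenever $\chi\neq 1$), so $\chi^d(c)=1$ for every $\chi\neq 1$. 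As $\gcd(d,\cardunits K)=1$ makes $\chi\mapsto\chi^d$ a permutation of $\carmul K$, this says $\eta(c)=1$ for all $\eta\in\carmul K$, whence $c=1$ because the characters separate the points of $\grmul K$. The reverse implication ($c=1\Rightarrow\sigma_j\in H$) is immediate.

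It then remains to count. The claim identifies $H=\{\sigma_j:j^{1-1/d}=1\}$ as the kernel of $j\mapsto j^{1-1/d}$ on the cyclic group $\grmul{\Fp}$, whose order is $\gcd(1-1/d,\,p-1)$. Since multiplication by the unit $d$ modulo $p-1$ is an automorphism of $\Z/(p-1)\Z$ and therefore preserves greatest common divisors with $p-1$, and since $d(1-1/d)\equiv d-1\pmod{p-1}$, this order equals $\gcd(d-1,\,p-1)$; hence $[E:\Q]=[G:H]=(p-1)/\gcd(d-1,p-1)$. Finally, the defining property of $m$ is that $(p-1)/m$ is the largest divisor of $p-1$ dividing $d-1$, that is $(p-1)/m=\gcd(d-1,p-1)$, which gives $[E:\Q]=m$ as desired. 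The main obstacle is the claim of the third paragraph, i.e.\ that no nontrivial multiplicative shift fixes the Weil sum; everything else is Galois-theoretic and arithmetic bookkeeping.
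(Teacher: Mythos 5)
Your proof is correct and follows essentially the same route as the paper: both rest on Lemma \ref{Nathan} combined with the Gauss-sum decomposition \eqref{decomposition}/\eqref{inversion} and the nonvanishing of Gauss sums for nontrivial characters to show that no nontrivial multiplicative shift $a \mapsto ca$ fixes $\Weil Kd$. The only difference is organizational: you identify the fixing subgroup $\Gal(\Q(\zeta_p)/E)$ exactly as the kernel of $j \mapsto j^{1-1/d}$ and compute its index to be $(p-1)/\gcd(d-1,p-1)=m$, whereas the paper sandwiches $[E:\Q]$ via the two inequalities $[E:\Q] \mid m$ and $[E:\Q] \geq m$; the substance is identical.
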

\begin{proof}
An arbitrary $\sigma \in \Gal(\Q(\zeta_p)/\Q)$ takes $\zeta_p$ to $\zeta_p^j$ for some $j \in \F_p^\times$.
So by Lemma \ref{Nathan}, we have
\begin{equation}\label{Simon}
\sigma^n(\Weil Kd(a))=\Weil Kd(j^{n(1-1/d)} a)
\end{equation}
for any $a \in \grmul K$ and $n \in \Z$.

Since $d \equiv 1 \pmod{(p-1)/m}$, we see that $j^{m(1-1/d)}=1$ for any $j \in \F_p^\times$.
Thus if $\sigma \in \Gal(\Q(\zeta_p)/\Q)$, then $\sigma^m$ fixes all the values of $\Weil Kd$.
So the subgroup of index $m$ in $\Gal(\Q(\zeta_p)/\Q)$ fixes all values in $E$, and so $[E:\Q]$ is a divisor of $m$.

Conversely, if we set $n=[E:\Q]$ and Fourier transform both sides of \eqref{Simon} with a multiplicative character $\chi \in \carmul K$, we obtain
\[
\sum_{a \in \grmul K} \Weil Kd(a) \chi(a) = \sum_{a \in \grmul K} \Weil Kd(j^{n(1-1/d)} a) \chi(a).
\]
The right hand side is ${\bar \chi}(j^{n(1-1/d)})$ times the left hand side.
The left hand side is nonzero, since it is either $q$ if $\chi$ is principal, or a product of Gauss sums involving nontrivial characters (use \eqref{inversion} with $\chi^t$ in place of $\chi$, where $t$ is the inverse of $-d$ modulo $q-1$).
Thus we must have $\chi(j^{n(1-1/d)})=1$ for all $j \in \F_p^\times$ and all $\chi \in \carmul K$, which forces $d \equiv 1 \pmod{(p-1)/n}$.
By the minimality of $m$, this means that $[E:\Q]=n \geq m$.
\end{proof}
\begin{remark}
Values of $\Weil Kd$ are always algebraic integers, so that if these lie in a field $E$, they actually lie in the ring of algebraic integers in $E$.
\end{remark}
\begin{remark}\label{Ramon}
In view of the previous remark, the special case of Lemma \ref{Vivian} when $m=1$ states that the values of $\Weil Kd(a)$ for $a \in \grmul K$ all lie in $\Z$ if and only if $d \equiv 1 \pmod{p-1}$.  This was proved in \cite[Theorem 4.2]{Helleseth}.
\end{remark}
The next result is reminiscent of the power moments of Section \ref{Gerald}.
We shall combine it with Lemma \ref{Nathan} in Corollary \ref{Elaine} below.
\begin{lemma}\label{Richard}
Let $K$ be a finite field.  For any $b \in K$ with $b\not=1$, we have
\[
\sum_{a \in \grmul K} \Weil Kd(a) \Weil Kd(b a)=0.
\]
\end{lemma}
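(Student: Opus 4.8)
The plan is to expand the product of Weil sums into a complete additive character sum and then apply orthogonality. First I would note that since $\Weil Kd(0)=0$, the $a=0$ term contributes nothing, so it is harmless to extend the summation to all of $K$:
\[
\sum_{a \in \grmul K} \Weil Kd(a)\,\Weil Kd(ba) = \sum_{a \in K} \Weil Kd(a)\,\Weil Kd(ba).
\]
Expanding each factor via the definition \eqref{James} and multiplying, the right-hand side becomes
\[
\sum_{a \in K}\sum_{x,y \in K} \psi_K\bigl(x^d+y^d+a(x+by)\bigr).
\]

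Next I would interchange the order of summation, performing the sum over $a \in K$ first. By orthogonality of $\psi_K$, the inner sum $\sum_{a \in K}\psi_K\bigl(a(x+by)\bigr)$ equals $\card K$ when $x+by=0$ and $0$ otherwise. This collapses the sum over $x,y$ to the line $x=-by$, yielding
\[
\card K \sum_{y \in K} \psi_K\bigl((-by)^d+y^d\bigr) = \card K \sum_{y \in K} \psi_K\bigl(((-b)^d+1)\,y^d\bigr).
\]

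Now set $c=(-b)^d+1$. Since $\gcd(d,\cardunits K)=1$, the map $y \mapsto y^d$ permutes $K$, so as $y$ runs over $K$ the value $y^d$ runs over $K$ as well; hence the final sum equals $\sum_{z \in K}\psi_K(cz)$, which is $\card K$ if $c=0$ and $0$ otherwise. The remaining point is to verify that $c\neq 0$ precisely when $b\neq 1$, and this sign bookkeeping is the step I expect to demand the most care, since it is characteristic-dependent. When $p=2$ we have $(-b)^d=b^d$, while when $p$ is odd the coprimality $\gcd(d,\cardunits K)=1$ forces $d$ to be odd (as $\cardunits K$ is even), so $(-b)^d=-b^d$. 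In either case $c=0$ is equivalent to $b^d=1$, and because $y \mapsto y^d$ is injective and fixes $1$, this holds if and only if $b=1$. Since $b\neq 1$ by hypothesis, $c\neq 0$, every sum above vanishes, and the total is $0$, as claimed.
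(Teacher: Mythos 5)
Your proof is correct and follows essentially the same route as the paper's: both include the $a=0$ term, expand the product of Weil sums, apply orthogonality of $\psi_K$ over $a$ to collapse the sum to the line $x=-by$, and conclude via the fact that $y \mapsto y^d$ permutes $K$ together with $1+(-b)^d \neq 0$ for $b \neq 1$. Your explicit characteristic-dependent verification that $(-b)^d = -1$ forces $b=1$ is a detail the paper leaves implicit, and it is carried out correctly.
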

\begin{proof}
Since $\Weil Kd(0)=0$, we may include the $a=0$ term in
\begin{align*}
\sum_{a \in \grmul K} \Weil Kd(a) \Weil Kd(b a)
& = \sum_{x,y \in K} \psi_K(x^d+y^d) \sum_{a \in K} \psi_K(a(x+by)) \\
& = \card{K} \sums{x,y \in K \\ x+b y=0} \psi_K(x^d+y^d) \\
& = \card{K} \sum_{y \in K} \psi_K(y^d(1+(-b)^d)),
\end{align*}
which vanishes because $y \mapsto y^d$ is a permutation of $K$, and $1+(-b)^d\not=0$ since $b\not=1$.
\end{proof}
Now we combine Lemmata \ref{Nathan} and \ref{Richard}.
\begin{corollary}\label{Elaine}
If $K$ is a finite field and $\sigma \in \Gal(\Q(\zeta_p)/\Q)$ permutes the values of $\Weil Kd$ nontrivially, then
\[
\sum_{a \in \grmul K} \Weil Kd(a) \sigma(\Weil Kd(a)) = 0.
\]
\end{corollary}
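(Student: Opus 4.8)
The plan is to translate the Galois action on the Weil sum values, which is governed by Lemma \ref{Nathan}, into a multiplicative dilation of the argument, and then to invoke the orthogonality-type vanishing of Lemma \ref{Richard}. These two lemmata are designed to fit together, so the proof should be short.

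First I would fix the data of $\sigma$. Let $j \in \F_p^\times$ be the residue (well-defined modulo $p$) with $\sigma(\zeta_p) = \zeta_p^j$, and set $b = j^{1-(1/d)}$, where $1/d$ denotes the multiplicative inverse of $d$ modulo $p-1$. Since $K$ has characteristic $p$, the prime field $\F_p$ is a subfield of $K$, so $b \in \grmul K$. Lemma \ref{Nathan} then states precisely that $\sigma(\Weil Kd(a)) = \Weil Kd(b a)$ for every $a \in \grmul K$, whence
\[
\sum_{a \in \grmul K} \Weil Kd(a)\, \sigma(\Weil Kd(a)) = \sum_{a \in \grmul K} \Weil Kd(a)\, \Weil Kd(b a).
\]

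The one substantive point is to verify that $b \neq 1$, since that is the hypothesis required to apply Lemma \ref{Richard} to the right-hand side. Here I would argue by contraposition: if $b = 1$, then the formula of Lemma \ref{Nathan} reads $\sigma(\Weil Kd(a)) = \Weil Kd(a)$ for all $a \in \grmul K$, so $\sigma$ fixes every value of $\Weil Kd$ and hence permutes the value set trivially, contrary to hypothesis. Therefore $b \neq 1$, and Lemma \ref{Richard} (applied with this $b \in \grmul K$) gives that the right-hand sum vanishes, completing the proof.

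I do not expect any real obstacle here: the entire argument is the composition of the identification $\sigma(\Weil Kd(a)) = \Weil Kd(b a)$ from Lemma \ref{Nathan} with the character-sum computation already carried out in Lemma \ref{Richard}, and the only thing to watch is the bookkeeping that ensures $b$ lands in $\grmul K$ and is distinct from $1$ exactly when $\sigma$ acts nontrivially. Since Lemma \ref{Nathan} makes the nontriviality of the action equivalent to $b \neq 1$, this step is immediate.
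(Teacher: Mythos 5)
Your proposal is correct and is essentially identical to the paper's own proof: both obtain $b$ from Lemma \ref{Nathan}, note that $b=1$ would force $\sigma$ to fix every value of $\Weil Kd$ (contradicting nontriviality), and then conclude with Lemma \ref{Richard}. The only quibble is your closing remark that nontriviality is \emph{equivalent} to $b\neq 1$ -- Lemma \ref{Nathan} only gives the implication you actually use (namely $b=1$ forces trivial action) -- but this does not affect the validity of the argument.
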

\begin{proof}
Lemma \ref{Nathan} furnishes an element $b$ such that $\sigma(\Weil Kd(a))=\Weil Kd(b a)$ for all $a \in \grmul K$, and clearly $b\not=1$, for otherwise $\sigma$ would fix each value taken by $\Weil Kd$.  Lemma \ref{Richard} finishes the proof.
\end{proof}

\section{Proof of Theorem \ref{Matilda}}\label{Arthur}

We have three fields $I \subseteq J \subseteq K$ with $[J:I]=2$.
Let $p$ be the characteristic of our fields.
As always, $\gcd(d,\cardunits{K})=1$.
We are given that $d$ is degenerate in $I$, but not in $J$.

We want to show that the value set of $\Weil Kd$ is not of the form $\{0,\pm A\}$.
Suppose the contrary.
By Proposition \ref{Michael}, $|A|$ must be an integral power of $p$ with $\sqrt{\card{K}} < |A| < \card{K}$, so then 
\begin{align*}
\Val Kd & = \valuation(A) \\
& > \valuation(\sqrt{\card{K}}) \\
& = \frac{1}{2} [K:\Fp].
\end{align*} 
On the other hand, by Corollary \ref{hasse} and Corollary \ref{subfield}, we get a contradiction because
\begin{align*}
\Val Kd 
& \leq [K:J] \times \Val Jd \\
& = [K:J] \times [I:\Fp] \\
& = \frac{1}{2} [K:\Fp].
\end{align*}

\section{Proof of Theorem \ref{Nancy}}\label{Benjamin}

We have $K$ a finite field of characteristic $p$ and order $q$ with $[K:\Fp]$ divisible by $2^s$.
As always, $\gcd(d,q-1)=1$.
We suppose that $\Weil Kd$ is symmetric three-valued with values $0$ and $\pm A$, and our goal is to show that $|A| \geq p^{2^{s-1}} \sqrt{q}$.

Note that $\F_{p^{2^s}} \subseteq K$.
Since $\Weil Kd$ is three-valued, $d$ is degenerate over $\Fp$ by Theorem \ref{Natalie}.
If $d$ were nondegenerate over $\F_{p^{2^s}}$, then there must be subfields $I$ and $J$ of $\F_{p^{2^s}}$ with $[J:I]=2$ and $d$ degenerate over $I$ but not over $J$.
Then Theorem  \ref{Matilda} tells us that $\Weil Kd$ is not symmetric three-valued, contrary to our hypothesis.

So $d$ is degenerate over $\F_{p^{2^s}}$, and thus every point of $\F_{p^{2^s}}$ is an element of the set $\Roots$ of roots of $(x+1)^d-x^d-1$.
Thus $\card{\Roots} \geq p^{2^s}$, so Proposition \ref{Michael} tells us that $|A| = \sqrt{\card{R} q} \geq p^{2^{s-1}} \sqrt{q}$.

\section{Proof of Theorem \ref{Priscilla}}\label{Clarence}

We have $L$ a finite field with $[L:\Fp]$ even, and $d$ is a power of $p$ modulo $\sqrt{\card{L}}-1$.
We want to show that $\Weil Ld$ is not three-valued.

Since we are considering $\Weil Ld$, the exponent $d$ is an invertible element modulo $\card{L}$.
If $d$ is degenerate over $L$, then $\Weil Ld$ is at most two-valued by \eqref{Augustine}, so we assume that $d$ is nondegenerate over $L$ henceforth.
The proof that $\Weil Ld$ is not three-valued when $L$ is of characteristic $2$ is given as \cite[Theorem 2]{Charpin}, so we assume that we are in odd characteristic henceforth.

Assume $\Weil Ld$ is three-valued to show a contradiction.
By Theorem \ref{Natalie} and Corollary \ref{George}, these three values are all in $\Z$, one of them is $0$, one is positive, and one is negative.
Let $K$ be the subfield of $L$ with $[L:K]=2$.
Then by Corollary \ref{William}, we know that $\Weil Ld(a) \geq 0$ for all $a \in K$.
Corollary \ref{David} shows that $\Weil Ld(-1)$ is odd, and that $\Weil Ld(a)$ is even for all other $a \in K$.
Since these are nonnegative, the positive value of $\Weil Ld$ must be $\Weil Ld(-1)$, and $\Weil Ld(a)=0$ for all other $a \in K$.
But Lemma \ref{Vladislav} tells us that $\sum_{a \in \grmul K} \Weil Ld(a)=\card{L}$, which forces $\Weil Ld(-1)=\card{L}$.
This contradicts Corollary \ref{Rufus}, since $\Weil Ld$ was assumed to be nondegenerate over $L$.

\section{New Proof of the Rationality of Three-Valued Weil Sums}\label{Lawrence}

We suppose that $\Weil Kd$ is three-valued, and we want to show that those three values lie in $\Z$.  As for the rest of Theorem \ref{Natalie}, the conclusion that $d\equiv 1 \pmod{p-1}$ will then follow immediately from Remark \ref{Ramon}, and the proof that one of the three values is $0$ is given in \cite[Theorem 5.2]{Katz}, which is not very difficult to follow.  The proof of rationality given here, while complex, is considerably easier than the original, given as \cite[Theorem 4.1]{Katz}.

Let $p$ and $q$ be respectively the characteristic and order of $K$, and so $\gcd(d,q-1)=1$.
Let $\zeta_p$ be a primitive $p$th root of unity over $\Q$.
Let $\Weil Kd(a)$ take the three distinct values $A$, $B$, and $C$, respectively, for $N_A$, $N_B$, and $N_C$ values of $a \in \grmul K$.
By Lemma \ref{Nathan}, the Galois group $\Gal(\Q(\zeta_p)/\Q)$ permutes $A$, $B$, and $C$.
The field $\Q(A,B,C)$ is a cyclic Galois extension of $\Q$ since it is contained in the cyclic extension $\Q(\zeta_p)$ of $\Q$.
Let $\sigma$ be a generator of $\Gal(\Q(A,B,C)/\Q)$.
There are three possible actions of $\sigma$ upon $\{A,B,C\}$: (i) $\sigma$ is the identity permutation, (ii) $\sigma$ acts transitively, or (iii) $\sigma$ permutes a pair of these elements, and fixes the third.
As $A$, $B$, and $C$ are algebraic integers, they lie in $\Z$ if and only if they lie in $\Q$, and this occurs precisely in Case (i).
So it suffices to show that Cases (ii) and (iii) are impossible.

In Case (ii), Corollary \ref{Genevieve} tells us that $N_A=N_B=N_C$, so they all equal $(q-1)/3$.
Then Lemma \ref{Prudence}\eqref{Edgar} shows that $N_A A+ N_B B + N_C C=q$, so that $A+B+C=3+\frac{3}{q-1}$.
As $A+B+C$ is fixed by $\sigma$, it lies in $\Q$, and is at the same time an algebraic integer, so it lies in $\Z$.
This means that $q-1\mid 3$, which forces $p=2$, in which case $\zeta_p=-1$, and so the values of $\Weil Kd$ lie in $\Z$, contradicting our supposition that $\sigma$ permutes them nontrivially.
So Case (ii) is impossible.

Henceforth, we suppose that we are in Case (iii).
Without loss of generality, we suppose that the generator $\sigma$ of $\Gal(\Q(A,B,C)/\Q)$ has $\sigma(A)=B$, $\sigma(B)=A$, and $\sigma(C)=C$.
Then $\sigma$ is of order $2$, and so $\Q(A,B,C)$ is a quadratic extension of $\Q$ lying in $\Q(\zeta_p)$.
There is no such thing if $p=2$ (since $\zeta_p=-1$, so $\Q(\zeta_p)=\Q$).
Otherwise, since $\Q(\zeta_p)$ is cyclic of degree $p-1$ over $\Q$, this means that $\Q(A,B,C)$ is the unique quadratic extension of $\Q$ contained in $\Q(\zeta_p)$.
In view of the values of the quadratic Gauss sums \cite{Gauss}, we know that this unique quadratic extension must be $\Q(\sqrt{p})$ if $p\equiv 1 \pmod{4}$, or $\Q(\sqrt{-p})$ if $p\equiv 3 \pmod{4}$.
But since $A$, $B$, and $C$ are real (see \cite[Theorem 3.1(a)]{Helleseth} or \cite[Theorem 2.1(c)]{Katz}), the latter case is impossible, so we must have $p \equiv 1 \pmod{4}$ and $\Q(A,B,C)=\Q(\sqrt{p})$.
Then $C \in \Z$, since it is an algebraic integer fixed by $\sigma$, and $A=a+b\sqrt{p}$ and $B=a-b\sqrt{p}$, for some $a, b$ with $2 a$, $2 b$, and $a+b \in \Z$, since this is the form of algebraic integers in $\Q(\sqrt{p})$, as shown in \cite[Chapter IV, Theorem 2.3]{Lang}.

Then Lemma \ref{Prudence}\eqref{Edgar},\eqref{Therese} tells us that
\begin{align}
N_A A + N_B B + N_C C & = q, \label{Abigail} \\
N_A A^2 + N_B B^2 + N_C C^2 & = q^2. \label{Barbara}
\end{align}
Also $\sum_{a \in \grmul K} \Weil Kd(a) \sigma(\Weil Kd(a))=0$ by Corollary \ref{Elaine}, so
\begin{equation}\label{Cecilia}
N_A A B + N_B B A + N_C C^2 = 0.
\end{equation}
By Corollary \ref{Genevieve}, we have $N_A=N_B$, and since $A=a+b\sqrt{p}$ and $B=a-b\sqrt{p}$, our three equations \eqref{Abigail}, \eqref{Barbara}, and \eqref{Cecilia} become
\begin{align*}
2 N_A a + N_C C & = q, \\
2 N_A (a^2+p b^2) + N_C C^2 & = q^2, \\
2 N_A (a^2-p b^2) + N_C C^2 & = 0,
\end{align*}
and this system is equivalent to the system
\begin{align}
2 N_A a + N_C C & = q, \label{Deborah} \\
4 N_A a^2 + 2 N_C C^2 & = q^2, \label{Eleanor} \\
4 N_A p b^2 & = q^2. \label{Felicity}
\end{align}
From \eqref{Felicity} we see that $p\mid N_A$.
Note that $C\not=0$, since otherwise \eqref{Deborah} and \eqref{Eleanor} imply that $N_A=1$, contradicting $p\mid N_A$.
If we subtract \eqref{Eleanor} from $2(a+C)$ times equation \eqref{Deborah}, we obtain
\[
2 (2 N_A + N_C) a C = q (2 a + 2 C - q),
\]
and since $N_A+N_B+N_C=q-1$, with $N_A=N_B$, this gives
\[
2 (q-1) a C = q (2 a + 2 C - q).
\]
Examine the $p$-adic valuation of each side of this equation to see that $\max\{\valuation(a),\valuation(C)\} \geq \valuation(q)$.
Then by Corollary \ref{Rufus}, we see that $|C| < q$, and since $C\not=0$, we must have $\valuation(C) < \valuation(q) \leq \valuation(a)$, so that $q\mid 2 a$.
If we reduce \eqref{Deborah} modulo $q$, we see that $q\mid N_C C$, but since $q\nmid C$, we have $p\mid N_C$.
Thus $p\mid N_A$ and $p\mid N_C$, and so $p\mid (2 N_A+N_C)=q-1$, which is absurd.
Thus Case (iii) is impossible, and the proof is complete.

\section*{Acknowledgements}

The second author was supported in part by a Research, Scholarship, and Creative Activity Award from California State University, Northridge.
The second author thanks Tor Helleseth for help with the history of these researches.

\end{document}